\newdefinition{defin}{Definition}
\newdefinition{ex}{Example}
\newdefinition{rem}{Remark}
\newtheorem{thm}{Theorem}
\newtheorem{prop}[thm]{Proposition}
\newproof{proof}{Proof}
\newtheorem{lem}{Lemma}
\begin{document}
\begin{frontmatter}
\title{The concept of self-similar automata over a changing alphabet and lamplighter groups generated by such automata}
\author[rvt]{Adam Woryna}
\ead{adam.woryna@polsl.pl}
\address[rvt]{Silesian University of Technology, Institute of Mathematics, ul. Kaszubska 23, 44-100 Gliwice, Poland}
\begin{abstract}
Generalizing the idea of self-similar groups defined by Mealy automata, we itroduce the notion of a self-similar automaton and a self-similar group over a changing alphabet. We show that every finitely generated residually-finite group is self-similar over an arbitrary unbounded changing alphabet. We construct some naturally defined self-similar automaton representations over an unbounded changing alphabet for any lamplighter group $K\wr \mathbb{Z}$ with an arbitrary finitely generated (finite or infinite) abelian group $K$.
\end{abstract}
\begin{keyword}
rooted tree  \sep changing alphabet \sep time-varying automaton \sep group generated by an automaton \sep lamplighter group
\end{keyword}
\end{frontmatter}

\section{Introduction and  main results}
The handling of various languages and approaches for describing groups of automorphisms of spherically homogeneous rooted trees brought  many significant results. For example, the algebraic language of the so-called 'tableaux' with  'truncated' polynomials over finite fields,   introduced by L.~Kaloujnine~\cite{17}, turned out to be effective in studying  these groups as iterated wreath products. V.~I.~Sushchanskyy~\cite{16} uses this language to construct the pioneering examples of  infinite periodic 2-generated $p$-groups ($p>2$) as well as to produce factorable subgroups of wreath products of groups~\cite{18,19}. A.~V.~Rozhkov~\cite{44}, by using the  notion of  the cortage and studying the so-called Aleshin type groups, first gave an example of a 2-generated periodic group containing elements of all possible finite orders (see also~\cite{66,55}). A.~Ershler~\cite{31,32} uses a combinatorial language of a time-varying automaton to discover new results concerning the growth and the so-called F${\rm \ddot{o}}$lner functions of groups.

In the present paper, by introducing the notion of a  self-similar automaton over a changing alphabet and the group generated by such an automaton, we  make an attempt to show that the concept of a self-similar group with its faithful action by automorphisms on  a homogeneous rooted tree (also called a regular rooted tree) can be naturally extended to an arbitrary spherically homogeneous rooted tree. As we show in Theorem~\ref{t1}, our combinatorial approach allows to characterize the class of finitely generated residually finite groups. We also provide (Theorem~\ref{t2})  naturally-defined representations for a large class of lamplighter groups for which the realization by the standard notion of a Mealy type automaton is not known.

\subsection{Mealy automata and self-similar groups}
Self-similar groups appear in many branches of mathematics, including operators algebra, dynamical systems, automata theory, combinatorics, ergodic theory, fractals,  and others. The most  interesting and beautiful examples of  self-similar actions are defined on the tree $X^*$ of finite words over a finite alphabet $X$ by using the notion of a transducer (also called a Mealy automaton), which, by definition, is a finite set $Q$ together with the transition function $\varphi\colon Q\times X\to Q$ and an output function $\psi\colon Q\times X\to X$. The convenient way defining and presenting a Mealy automaton is to draw its diagram. For example, the diagram in Figure~\ref{f1}
determines a Mealy automaton  over  the alphabet $X=\{0,1\}$, with five states $a, b, c, d, e$, with a transition function  given by the oriented edges and with an output function   given by labelling of the vertices by the maps $X\to X$, where $id$ denotes the identity map and $\sigma$ is a transposition: $0\mapsto 1, 1\mapsto 0$.
\begin{figure}[hbtp]
\centering
\includegraphics[width=4.5cm]{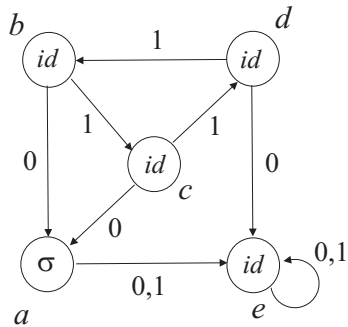}
\caption{The automaton generating the Grigorchuk group}
\label{f1}
\end{figure}
We easily verify from this diagram  the following equalities defining the transition and output functions of this automaton:
\begin{eqnarray*}
\varphi(a,x)&=&\varphi(e,x)=e,\;\;\;x\in X,\\
\varphi(b,0)&=&a,\;\; \varphi(b,1)=c,\;\;\varphi(c,0)=a,\\
\varphi(c,1)&=&d,\;\varphi(d,0)=e,\;\;\varphi(d,1)=b,\\
\psi(a,x)&=&\sigma(x),\;\psi(b,x)=\psi(c,x)=\psi(d,x)=\psi(e,x)=x,\;\;\;x\in X.
\end{eqnarray*}

Each state $q\in Q$ of a Mealy automaton $A=(X, Q, \varphi, \psi)$ defines a transformation of the tree $X^*$. Namely, given a word $x_0x_1\ldots x_k\in X^*$ the automaton $A$ acts on the first letter $x_0$ by the map which  labels the vertex corresponding to the state $q$ in the diagram of $A$, changes its state to the state given by the end of the arrow going out of $q$ and labelled by $x_0$; being in a new state the automaton reads the next letter $x_1$, transforms it and moves to the next state according to the above rule; and continues in this way to transform the remaining letters of the word. The induced  transformation of the set $X^*$  preserves the empty word and the beginnings of words and, if  $A$ is invertible (i.e. all the maps labelling  vertices in the diagram of $A$ are permutations of the alphabet $X$), it defines an automorphism of the tree $X^*$.

The group generated by automorphisms corresponding to all states of an invertible automaton $A$ with the composition of mappings as a product is called the group generated by this automaton. According to~\cite{0} (Definition~1.1, p. 129) an abstract group $G$ is called self-similar over an alphabet $X$ if there is a Mealy automaton $A=(X, Q, \varphi, \psi)$ which generates a group isomorphic with $G$.
For instance, the group $G$ generated by the automaton depicted in Figure~\ref{f1} is a fameous Grigorchuk group constituting one of the most interesting example among groups generated by all the states of a Mealy automaton and,  as it turned out, in the whole group theory. This  is also the first nontrivial example of a self-similar group.
Groups generated by the states of a Mealy automaton form a remarkable class of groups containing various important examples connected to many interesting topics. For more about Mealy automata, groups generated by them, self-similar groups and for many open problems around these concepts see the survey paper~\cite{1} or~\cite{4}.

\subsection{Time-varying automata and self-similarity over a changing alphabet}

A time-varying automaton, which is a natural generalization of a Mealy type automaton, allows to work not only with a fixed finite alphabet $X$ but also with an infinite sequence
$$
X=(X_i)_{i\geq 0}=(X_0, X_1, \ldots)
$$
of such  alphabets, which further will be called a  changing alphabet. We say that the changing alphabet $X$ is  unbounded if the sequence of cardinalities $(|X_i|)_{i\geq 0}$ (the so-called  branching index of $X$) is unbounded.

By definition, a time-varying automaton  over a changing alphabet $X=(X_i)_{i\geq 0}$ is a quadruple
$$
A=(X, Q, \varphi, \psi),
$$
where $Q$ is a finite set of states, $\varphi=(\varphi_i)_{i\geq 0}$ and $\psi=(\psi_i)_{i\geq 0}$ are sequences of, respectively,  transition functions and output functions of the form
$$
\varphi_i\colon Q\times X_i\to Q,\;\;\;\psi_i \colon Q\times X_i\to X_i.
$$
The automaton $A$ is called invertible if $x\mapsto \psi_i(q, x)$ is an invertible mapping of the set $X_i$ for all $i\geq 0$ and $q\in Q$. For every  $k\geq 0$ we also define the {\it $k$-th shift} of the automaton $A$ that is a time-varying automaton
$$
A_{k}=(X_{(k)}, Q, \varphi_{(k)}, \psi_{(k)}),
$$
where
$$
X_{(k)}=(X_{k+i})_{i\geq0},\;\;\;\varphi_{(k)}=(\varphi_{k+i})_{i\geq 0},\;\;\;\psi_{(k)}=(\psi_{k+i})_{i\geq 0}.
$$
In particular, $A$ is a Mealy automaton if and only if  $A_k=A$ for every $k\geq 0$.

At first, in the literature of  time-varying automata only the notion of an automaton over a fixed alphabet was considered and merely the structural properties of such automata were studied (see~\cite{9}). We first define in~\cite{11} the class of time-varying automata over a changing alphabet $X=(X_i)_{i\geq0}$ (see also~\cite{6,12,15}). Such an alphabet defines a tree $X^*$ of finite words, which is an example of a  spherically homogeneous rooted tree. The tree $X^*$ is  homogeneous if and only if the branching index of $X$ is constant. Moreover, any locally finite spherically homogeneous rooted tree $T$ is isomorphic with $X^*$ for a suitable changing alphabet $X=(X_i)_{i\geq 0}$.

We introduced in~\cite{11} the notion of a group generated by  an automaton over a changing alphabet and show   that this combinatorial language is apt to define and study groups of automorphisms of  spherically homogeneous rooted trees which may be not homogeneous.  Specifically, if $A=(X, Q, \varphi, \psi)$ is an invertible time-varying automaton over a changing alphabet $X=(X_i)_{i\geq 0}$, then for every $k\geq 0$ and each state $q\in Q$ we define, via transition and output functions, an automorphism  $q_k$ of the tree $X^*_{(k)}$  and call the group
$$
G(A_k)=\langle q_k\colon q\in Q\rangle
$$
as a {\it group generated by the $k$-th shift of $A$}.

\begin{defin}
An invertible time-varying automaton $A=(X, Q, \varphi, \psi)$ is called {\it self-similar} if for every $k\geq0$ the mapping $q_k\mapsto q_0$ ($q\in Q$) induces an isomorphism $G(A_{k})\simeq G(A)$.
\end{defin}

If  $A=(X, Q, \varphi, \psi)$ is a Mealy automaton, then we have $q_k=q_0$ for all $k\geq 0$ and $q\in Q$. Thus every Mealy automaton is self-similar by the above definition.

\begin{defin}
An abstract group $G$ is called a {\it self-similar group  over the changing alphabet $X=(X_i)_{i\geq 0}$} if there is a self-similar  automaton $A=(X, Q, \varphi, \psi)$ which generates a group isomorphic with $G$, i.e. $G(A)\simeq G$.
\end{defin}

Every self-similar group $G$ over a changing alphabet $X$ is an example of a finitely generated residually finite group (a group  is called residually finite if there is a descending chain $N_0\geq N_1\geq N_2\geq \ldots$ of normal subgroups of finite indexes and the trivial intersection). Indeed, the group $Aut(X^*)$ of all automorphisms of the tree $X^*$ is an example of a residually finite group (stabilizers of consecutive levels of this tree are normal subgroups of  finite indexes which intersect trivially). Thus $G$, as a group isomorphic with a subgroup of $Aut(X^*)$, is residually finite.

Given an abstract group $G$ and a changing alphabet $X=(X_i)_{i\geq 0}$, it is natural to ask about the {\it automaton complexity of the group $G$ over the alphabet $X$}, i.e. about the minimal number of states in an automaton  over  $X$ which generates a group isomorphic with $G$. Obviously, for any changing alphabet $X$ the automaton complexity of any group $G$ over $X$ is not smaller than the rank $r(G)$ of this group, i.e. than the minimal cardinality of a generating set of $G$.

\begin{defin}
If the number of states in an automaton $A=(X, Q, \varphi, \psi)$  coincides with the rank of the group $G(A)$, i.e. $|Q|=r(G(A))$, then the automaton $A$ is called {\it  optimal}.
\end{defin}

\subsection{The class of finitely generated self-similar groups over an unbounded changing alphabet}
In the theory of groups generated by Mealy automata the problem of classifying all  groups generated by  automata over a given alphabet $X$ and with a given number of states  is essential and intensively studied. For example, by the result from~\cite{1}  there are only six  groups (up to isomorphism) generated by 2-state Mealy automata over the binary alphabet, including the simplest example of an interesting group generated by a Mealy automaton, that is the lamplighter group $C_2\wr \mathbb{Z}$, where $C_2$ denotes a cyclic group of order 2 and $\mathbb{Z}$ is  the infinite cyclic group.
\begin{figure}[hbtp]
\centering
\includegraphics[width=6.3cm]{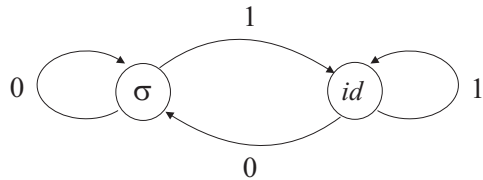}
\caption{A Mealy automaton generating the lamplighter group $C_2\wr \mathbb{Z}$}
\label{f3}
\end{figure}
Much effort put towards a classification of groups generated by 3-state Mealy automata over the binary alphabet. In the paper~\cite{7}  was shown, among other things, that there are no more than $124$ pairwise nonisomorphic groups defined by such automata.  Only recently it was discovered that a free-nonabelian group of finite rank is self-similar. Namely, M.~Vorobets and Ya.~Vorobets showed in~\cite{8} that a Mealy automaton (the so-called Aleshin type automaton) depicted in Figure~\ref{f2}  generates a free group of rank 3. Moreover, it is  still an open question whether or not there exists an optimal Mealy automaton which generates a free group of rank 2. Referring to this problem, we provided~\cite{15} for this group an explicit and naturally defined realization by an optimal automaton over a changing alphabet; see also Example~\ref{ex2}.
\begin{figure}[hbtp]
\centering
\includegraphics[width=5.5cm]{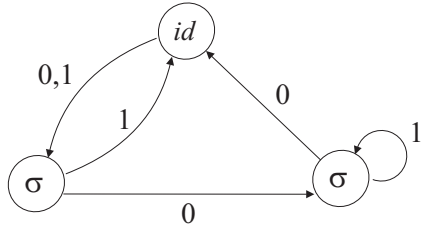}
\caption{A Mealy automaton generating a free group}
\label{f2}
\end{figure}

The first result of the present paper  shows that the situation completely changes if we try to characterize all self-similar groups over an arbitrary unbounded changing alphabet. Namely,  we show the following

\begin{thm}\label{t1}
Every finitely generated   residually finite group $G$  is self-similar over any unbounded changing alphabet and the corresponding self-similar automaton is optimal.
\end{thm}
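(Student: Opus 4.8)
The plan is to realize $G$ by a \emph{diagonal} automaton in which each state corresponds to a generator and no state ever changes, so that the section of every generator at every vertex is again that same generator. Fix a generating set $\{g_1,\dots,g_n\}$ of $G$ with $n=r(G)$ and take the state set to be $Q=\{q_1,\dots,q_n\}$, forcing the transition function to be trivial: $\varphi_i(q_j,x)=q_j$ for all $i\ge 0$, $x\in X_i$ and $1\le j\le n$. With this choice the automorphism $q_{j,0}$ acts on $X^*$ \emph{coordinatewise}: a word $x_0x_1x_2\cdots$ is sent to $\sigma^{(0)}_j(x_0)\,\sigma^{(1)}_j(x_1)\,\sigma^{(2)}_j(x_2)\cdots$, where $\sigma^{(i)}_j:=\psi_i(q_j,\cdot)\in \mathrm{Sym}(X_i)$ is the permutation by which the $j$-th generator acts on the $i$-th level. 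The only remaining freedom is thus the choice of the $\sigma^{(i)}_j$, and the whole action of $G(A)$ is the product $\prod_{i\ge 0}\rho^{(i)}$ of the level maps $\rho^{(i)}\colon G\to\mathrm{Sym}(X_i)$, $g_j\mapsto\sigma^{(i)}_j$, provided each $\rho^{(i)}$ is a homomorphism.

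To define the $\rho^{(i)}$ I would use residual finiteness to fix a countable family of finite quotients $\pi_m\colon G\to G/N_m$ with $\bigcap_m\ker\pi_m=1$, and realize each $\pi_m$ (as the coset action of $G$ on $G/N_m$, a permutation representation on a set $\Omega_m$ of size $[G:N_m]$) on infinitely many levels. Concretely, on a level $i$ I set $\rho^{(i)}$ to be some $\pi_m$ composed with a fixed embedding $\mathrm{Sym}(\Omega_m)\hookrightarrow\mathrm{Sym}(X_i)$ that fixes the remaining $|X_i|-[G:N_m]$ letters pointwise; this requires only $|X_i|\ge[G:N_m]$, and I leave a level inactive (setting $\rho^{(i)}=\mathrm{id}$) whenever its alphabet is too small for the quotient I would otherwise assign to it. Since $X$ is \emph{unbounded}, for each fixed $m$ there are infinitely many levels $i$ with $|X_i|\ge[G:N_m]$, so a routine dovetailing schedule lets me realize every $\pi_m$ at infinitely many levels simultaneously. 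Each $\rho^{(i)}$ is then a homomorphism, so $g_j\mapsto q_{j,0}$ extends to a homomorphism $\Phi\colon G\to Aut(X^*)$ with image $G(A)$.

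Finally, $\ker\Phi=\bigcap_{i\ge 0}\ker\rho^{(i)}=\bigcap_m\ker\pi_m=1$ (each $\pi_m$ occurs among the $\rho^{(i)}$), so $\Phi$ is injective and $G(A)\simeq G$; since $|Q|=n=r(G)=r(G(A))$ the automaton is optimal. For self-similarity, note that the shift $A_k$ is again a diagonal automaton whose level maps are the tail $(\rho^{(i)})_{i\ge k}$, so $G(A_k)=\Phi_k(G)$ with $\ker\Phi_k=\bigcap_{i\ge k}\ker\rho^{(i)}$; because every $\pi_m$ was scheduled at infinitely many levels, this tail intersection is still trivial, whence $G(A_k)\simeq G$ via $q_{j,k}\mapsto g_j\mapsto q_{j,0}$, which is exactly the required isomorphism. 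The one point demanding care --- and the real reason unboundedness is indispensable --- is the scheduling step: I must guarantee that each separating quotient reappears cofinally (so that \emph{every} tail intersection, not just the full one, is trivial) while only ever using a level whose alphabet is large enough to host the corresponding permutation representation. Everything else (that coordinatewise maps are tree automorphisms, that the displayed recursion holds, and that $\Phi$ is a homomorphism) is routine verification.
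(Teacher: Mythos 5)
Your proposal is correct and follows essentially the same route as the paper: an automaton of diagonal type on a minimal generating set, with output permutations realizing finite quotients (supplied by residual finiteness) at levels chosen using unboundedness of the alphabet, so that triviality of every tail kernel gives self-similarity and $|Q|=r(G)$ gives optimality. The only difference is bookkeeping: the paper fixes a descending normal series $G=N_0\triangleright N_1\triangleright\ldots$ and assigns each quotient $G/N_i$ once to a block of consecutive levels (the nesting then automatically makes every tail separating, since $g\notin N_{k}$ implies $g\notin N_{k'}$ for all $k'\geq k$), whereas you schedule each separating quotient at infinitely many levels by dovetailing --- an equivalent device.
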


As we will see in the proof, the construction leading to Theorem~\ref{t1} is based on the specific type of automaton which we call an automaton of a  diagonal type.

\begin{defin}
We say that a time-varying automaton $A=(X, Q, \varphi, \psi)$ is of {\it a diagonal type}, or that $A$ has transition functions defined in the {\it diagonal way} if $\varphi_i(q, x)=q$ for all $i\geq 0$, $q\in Q$ and $x\in X_i$.
\end{defin}

Note that the proof of Theorem~\ref{t1} is  unconstructive and we can not explicitly define the output functions of the corresponding self-similar automaton for a given group $G$ and a changing alphabet $X=(X_i)_{i\geq 0}$.   Indeed, in that proof we choose for the construction of an automaton $A$ which generates the group $G$  an arbitrary generating set $S$  as a set of states and we  define the transition functions in the diagonal way. Next, we show that the output functions of $A$ can be induced by some embedding $g\mapsto (g^{(0)}, g^{(1)}, \ldots)$ of the group $G$ into the infinite cartesian product $S(X_0)\times S(X_1)\times \ldots$ of symmetric groups on sets of letters; due to this embedding we define the output functions as follows: $\psi_i(s, x)=s^{(i)}(x)$ for all $i\geq 0$, $s\in S$ and $x\in X_i$. However, the above embedding arises simply from the fact that $G$ embeds into the cartesian product $G/N_0\times G/N_1\times\ldots$, where $G=N_0\triangleright N_1\triangleright N_2\triangleright \ldots$ is an infinite normal series with finite quotients $G/N_i$ ($i\geq 0$) and the trivial intersection. But the existence of such a normal series is equivalent to the assumption that $G$ is residually finite. Hence, given a residually finite group $G$, this method does not tell us how to compute effectively the permutations $s^{(i)}\in S(X_i)$. In particular, we know nothing about the cycle structure of these permutations. Moreover, in the case of a free group of rank~2 we tried  to find in the paper~\cite{6} an  explicit representation by an optimal  automaton of a diagonal type. However,  our computations led to a pretty complicated description of the mappings $s^{(i)}$ and, consequently, of output functions in such an automaton.

In view of  the above disadvantages as well as the previous results, it would be interesting  to find and study  explicit and naturally defined representations of a given group $G$ by self-similar automata over a changing alphabet, especially if  such representations are unknown by using the notion of a Mealy automaton.

\subsection{Lamplighter groups as self-similar groups over a changing alphabet}
Let $K$ be an arbitrary nontrivial finitely generated (finite or infinite) abelian group.  By definition, the lamplighter group $K\wr \mathbb{Z}$ is a semidirect product
$$
\bigoplus_\mathbb{Z}K\rtimes \mathbb{Z}
$$
with the action of the infinite cyclic group  on the direct sum $\bigoplus_\mathbb{Z}K$ by a shift. For the rank $r(K\wr\mathbb{Z})$ of this group we have: $r(K\wr \mathbb{Z})=r(K\times \mathbb{Z})=r(K)+1$. In recent years groups of this form are intensively studied via their self-similar actions. For example, the automaton realization of the lamplighter group $C_2\wr\mathbb{Z}$ from the paper~\cite{2} was used by R.~Grigorchuk and A.~${\rm \dot{Z}}$uk to compute the spectral measure associated on random walks on this group, which leads to a counterexample to the strong Atiyach conjecture~\cite{10}. This construction was generalized~\cite{3,5} by P.~Silva and B.~Steinberg in the concept of a reset Mealy automaton, where for every finite abelian group $K$ the corresponding lamplighter group  was realized as a group generated by a Mealy automaton in which the set of states and the alphabet coincides with $K$. Hence,  the only known optimal realization of the group $K\wr \mathbb{Z}$ by an automaton concerns the simplest case $K=C_2$. Here, it should also be mentioned that the question whether or not there exists a representation of a lamplighter group $K\wr \mathbb{Z}$ with an infinite  $K$  by a   Mealy automaton is still open.

For the second result of this paper we come from a naturally defined  and self-similar automaton $\mathcal{D}$ of a diagonal type over a changing alphabet $X$ that  generates  the direct product $K\times\mathbb{Z}$ (see Example~\ref{ex0}). We discover that only by a  simple manoeuvre  on each transition function in $\mathcal{D}$  we can pass to a new automaton $\mathcal{A}$ and obtain in this way a self-similar optimal automaton realization for the lamplighter group $K\wr \mathbb{Z}$. Namely,  we distinguish the state $a$ in the automaton $\mathcal{D}$  and  the letters $x_{q, i}\in X_i$ ($i\geq 0$) for each state  $q\neq a$ and we define  $\mathcal{A}$ as an automaton which operates like $\mathcal{D}$, unless being in a state $q\neq a$ it receives the letter $x_{q, i}$ in a moment $i\geq 0$, then it moves to the state~$a$.

To define the changing alphabet $X=(X_i)_{i\geq0}$ and output functions in the automaton $\mathcal{D}$ we decompose the group $K$ into a direct product of cyclic groups
$$
K=\mathbb{Z}^{n_1}\times (C_{r_1}\times\ldots\times C_{r_{n_2}}),\;\;\;n_1, n_2\geq 0
$$
such that $n_1+n_2=r(K)$. Let us denote $n:=r(K)=n_1+n_2$. For every $i\geq 0$ we consider arbitrary $2n$ pairwise disjoint  cycles
$$
\pi_{1,i}, \ldots, \pi_{n, i},\; \sigma_{1, i}, \ldots, \sigma_{n, i}
$$
in the symmetric group on a finite set $X_i$ which satisfy the following conditions: for each $1\leq s\leq n$ the cycles  $\pi_{s,i}$ ($i\geq 0$) do not have a uniformly bounded length, as well as the  cycles $\sigma_{s, i}$ ($i\geq 0$) for each $1\leq s\leq n_1$, and  the length of each cycle $\sigma_{s,i}$ ($n_1<s\leq n$, $i\geq 0$) does not depend on $i$ and   is equal to $r_{s-n_1}$. In particular, we see that the arising changing alphabet  $X=(X_i)_{i\geq0}$ is unbounded.

Now, taking $n$ symbols $b_1, \ldots, b_n$  different from $a$, we define the  automaton $\mathcal{D}$ as an automaton of a diagonal type over the changing alphabet $X=(X_i)_{i\geq0}$, with the set of states $Q=\{a, b_1,\ldots, b_{n}\}$ and the output functions  defined as follows:
$$
\psi_i(q,x)=\left\{\begin{array}{ll}\label{3}
\alpha_i(x),&q=a,\\
\beta_{s, i}(x), &q=b_s,\;1\leq s\leq n
\end{array}\right.
$$
for all $i\geq 0$, $q\in Q$ and $x\in X_i$, where the permutations $\alpha_i$, $\beta_{s,i}$  are defined in the following way
$$
\alpha_i=\pi_{1,i}\cdot\ldots\cdot\pi_{n, i},\;\;\;\;\;\beta_{s, i}= \sigma_{s, i}\cdot\alpha_i.
$$
As we mention above, passing to the  automaton $\mathcal{A}$ just needs to redefine  transition functions of $\mathcal{D}$. Explicitly, we define: $\mathcal{A}=(X, Q, \varphi^{\mathcal{A}}, \psi)$, where
$$
\varphi_i^{\mathcal{A}}(q, x)=\left\{\begin{array}{ll}\label{2}
a, &q=b_s,\;\;x=x_{s,i},\;\;1\leq s\leq n,\\
q, &\mbox{\rm otherwise},
\end{array}\right.
$$
and  $x_{s,i}:=x_{b_s,i}$ is a letter from the cycle $\pi_{s,i}$.

Our main result  is the following
\begin{thm}\label{t2}
The automaton $\mathcal{A}$ is self-similar and the group $G(\mathcal{A})$ is isomorphic with the lamplighter group $K\wr \mathbb{Z}$.
\end{thm}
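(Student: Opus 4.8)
The plan is to realize the isomorphism through the self-similar (wreath) recursions of the generators and then to verify the defining relations of $K\wr\mathbb{Z}$ together with faithfulness of the action on $X^*$. First I would record, for every $k\ge 0$, the first-level decomposition of the generators of $G(\mathcal{A}_k)$. Since $a$ keeps its diagonal transitions, $a_k$ has root label $\alpha_k$ and all sections equal to $a_{k+1}$; while $(b_s)_k$ has root label $\beta_{s,k}=\sigma_{s,k}\alpha_k$, with section $a_{k+1}$ at the single vertex $x_{s,k}$ and section $(b_s)_{k+1}$ at every other vertex. Introducing $d_s^{(k)}:=(b_s)_k\,a_k^{-1}$, a short computation with the section formula then gives that $d_s^{(k)}$ has root label $\sigma_{s,k}$, trivial section at the vertex $\alpha_k(x_{s,k})$ (which lies in $\operatorname{supp}\pi_{s,k}$, hence off $\operatorname{supp}\sigma_{s,k}$), and section $d_s^{(k+1)}$ everywhere else. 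I would take $t:=a_0$ and $\ell_s:=d_s:=d_s^{(0)}$ as the candidate images of the standard generators of $K\wr\mathbb{Z}=\langle t,\ell_1,\dots,\ell_n\rangle$, where $\langle\ell_1,\dots,\ell_n\rangle\cong K$ and $t$ acts by the shift.

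Next I would build an epimorphism $\Phi\colon K\wr\mathbb{Z}\to G(\mathcal{A})$ by checking that $t$ and the $d_s$ satisfy the defining relations of the lamplighter group: (i) $a_0$ has infinite order, being the diagonal automorphism $(\alpha_k)_k$ whose cycle lengths $|\pi_{s,k}|$ are unbounded, so no power is trivial; (ii) the torsion relations $d_s^{\,r_{s-n_1}}=1$ for $n_1<s\le n$, and the infinite order of $d_s$ for $s\le n_1$, which follow from the recursion for $d_s^{(k)}$ by a level-by-level induction using that $|\sigma_{s,k}|=r_{s-n_1}$ is constant (respectively unbounded); and (iii) the commutation relations $[d_s,d_{s'}]=1$ and $[a_0^{m}d_s a_0^{-m},d_{s'}]=1$ for $m\neq 0$. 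For (iii) I would first compute that $a_0^{m}d_s a_0^{-m}$ again has root label $\sigma_{s,0}$ (here $a_0$ commutes with $\sigma_{s,0}$ because $\operatorname{supp}\alpha_0$ and $\operatorname{supp}\sigma_{s,0}$ are disjoint), a single trivial section at $\pi_{s,0}^{\,m+1}(x_{s,0})$, and section $a_1^{m}d_s^{(1)}a_1^{-m}$ elsewhere. Each commutator then has trivial root label, and all of its sections are either trivial or equal to the corresponding commutator one level down; since the construction has the same shape after shifting, a coinductive argument (the commutator acts trivially on every level, hence is trivial by faithfulness of the action on $X^*$) yields the relations and makes $\Phi$ a well-defined epimorphism.

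The heart of the proof, and the step I expect to be the main obstacle, is the injectivity of $\Phi$, i.e. faithfulness of the resulting action of $K\wr\mathbb{Z}$ on $X^*$. Writing a general element in normal form as $w=u\,t^{j}$ with $u=\prod_m a_0^{m}\bar v_m a_0^{-m}$ a finitely supported lamp configuration, I would show that $\Phi(w)=1$ forces $u=1$ and $j=0$. The key observation is that the single trivial section (the ``hole'') of each lamp $a_0^{m}d_s a_0^{-m}$ sits at $\pi_{s,0}^{\,m+1}(x_{s,0})$, whose location depends on $m$ and separates distinct positions at sufficiently deep levels; tracking these holes through successive sections lets one read off the position and value of each lamp. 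Concretely, at vertices avoiding all the finitely many holes of the lamps occurring in $w$ -- such vertices exist arbitrarily deep precisely because the changing alphabet is unbounded -- the section of $\Phi(w)$ reproduces $\Phi_{k}(w)$, the image of the same element $w$ under the analogous map for the shifted automaton $\mathcal{A}_{k}$, up to the action of the root labels. Meanwhile the root labels see $\alpha_0^{j}$ on $\operatorname{supp}\alpha_0$ and the lamp contributions on the disjoint sets $\operatorname{supp}\sigma_{s,0}$. Triviality of $\Phi(w)$ on all levels thus forces $\alpha_k^{j}=1$ for all $k$, whence $j=0$ by unboundedness of the $\pi$-cycles, and an induction on the number of lamps, separating them by their distinct holes, forces every $\bar v_m=1$. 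The careful bookkeeping of holes and the guarantee of enough room at deep levels (again from unboundedness) is the delicate part.

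Finally, $\Phi$ being an isomorphism gives $G(\mathcal{A})\simeq K\wr\mathbb{Z}$. Since every step above is uniform in the shift parameter $k$ -- the cycles $\pi_{s,k+i},\sigma_{s,k+i}$ satisfy exactly the same hypotheses as those for $\mathcal{A}$ -- the same argument produces isomorphisms $\Psi_k\colon G(\mathcal{A}_k)\to K\wr\mathbb{Z}$ sending $a_k\mapsto t$ and $(b_s)_k\mapsto \ell_s t$. Then $\Psi_0^{-1}\circ\Psi_k\colon G(\mathcal{A}_k)\to G(\mathcal{A})$ sends $a_k\mapsto a_0$ and $(b_s)_k\mapsto (b_s)_0$, so it is induced by $q_k\mapsto q_0$; hence $\mathcal{A}$ is self-similar, which together with the computation of $G(\mathcal{A})$ completes the proof.
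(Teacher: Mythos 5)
Your proposal is correct in substance and, at its core, runs on the same engine as the paper's proof, but it is packaged along a genuinely different route. The paper never writes down a presentation of $K\wr\mathbb{Z}$: it works internally, defining $c_{s,i}=(b_s)_ia_i^{-1}$ (with the right-action convention, so the ``hole'' sits at $x_{s,i}$ itself rather than at $\alpha_i(x_{s,i})$ as in your left-action computation --- both are internally consistent), proving $K_i=\langle c_{1,i},\dots,c_{n,i}\rangle\simeq K$ (Proposition~\ref{prop1}), showing that $H_i$ is the direct sum of the conjugates $K_i^{(k)}=a_i^{-k}K_ia_i^{k}$ by a minimal-weight counterexample argument (Proposition~\ref{prop2}), and then assembling $G_i=H_i\rtimes\langle a_i\rangle$ via Lemmas~\ref{lem4} and~\ref{lem5}. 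You instead map \emph{onto} $G(\mathcal{A})$ from the standard infinite presentation of $K\wr\mathbb{Z}$ (the torsion relations of $K$ together with commutation of all $t$-conjugates of the lamps --- this is indeed a presentation of the wreath product, since $K$ is abelian) and prove the kernel trivial. Your coinductive verification of the commutator relations via sections is a clean substitute for the paper's explicit case-by-case recursions such as~(\ref{14}) and its $d$-analogue; and your injectivity argument --- reading off lamp positions from the holes $\pi_{s,0}^{\,m+1}(x_{s,0})$ at levels where the $\pi$-cycles are long enough, and killing $t^{j}$ from the root labels $\alpha_k^{j}$ --- is precisely the mechanism of Proposition~\ref{prop2} (your induction on the number of lamps playing the role of the paper's minimal nonzero weight) combined with Lemma~\ref{lem5}(ii). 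What the paper's packaging buys is that the semidirect-product structure of $G_i$ is exhibited directly, with no appeal to any presentation; what yours buys is more conceptual wreath-recursion bookkeeping, with the relations checked once by coinduction rather than by explicit formulas, at the cost of having to justify the presentation of $K\wr\mathbb{Z}$.

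One small correction: the existence of ``safe'' vertices avoiding all holes does \emph{not} come from unboundedness of the changing alphabet, as you assert. Every hole lies in some $\mathrm{supp}\,\pi_{s,k}$, so any letter of $\mathrm{supp}\,\sigma_{1,k}$ is safe, and its images under the root labels stay inside the $\sigma$-supports, hence remain safe at every step of the product --- this disjointness argument is the role of the letters $z_i$ and inequality~(\ref{eqqq}) in the paper. Unboundedness is needed exactly where you otherwise invoke it: to force $j=0$ from $\alpha_k^{j}=1$ for all $k$, and to guarantee a level $\nu$ at which the holes of lamps at distinct positions $m\neq m'$ are distinct letters (the paper's condition that the length of $\pi_{s,\nu}$ exceed $|k_1|+\dots+|k_t|$). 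With that repair, your sketch completes along the same lines as the paper's Section~4, and your final uniformity-in-$k$ argument for self-similarity agrees with the paper's conclusion.
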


In the proof of Theorem~\ref{t2} we do not use the algebraic language of  embedding into wreath products of groups (the so-called wreath recursion), which is  common in studying self-similar groups. Instead of this, for any $i\geq 0$ we directly provide  some  recursive formulae describing the action on  the tree $X^*_{(i)}$ of  automaton transformations  $a_i, c_{s,i}\colon X^*_{(i)}\to X^*_{(i)}$, where
$$
c_{s,i}=(b_s)_i\cdot a_i^{-1},\;\;\;1\leq s\leq n.
$$
We show (Proposition~\ref{prop1} and Lemma~\ref{lem1}) that for every $i\geq0$ any two transformations $c_{s, i}$ and $c_{s', i}$ ($1\leq s, s'\leq n$) commute and the mapping $c_{s, i}\mapsto \kappa_s$ induces an isomorphism between the group
$$
K_i=\langle c_{i,1}, \ldots, c_{i, n}\rangle
$$
and the group $K$, where the elements $\kappa_s\in K$ ($1\leq s\leq n$) form the standard generating set of $K$, that is $\kappa_s=(0, \ldots, 0, u_s, 0,\ldots, 0)$, where $u_s$ occurs in the $s$-th position and represents a generator of the corresponding cyclic factor of $K$. Next, for each $k\in\mathbb{Z}$ we consider the conjugation $K_i^{(k)}=a_i^{-k}K_ia_i^k$ as well as the group
$$
H_i=\langle K_i^{(k)}\colon k\in\mathbb{Z}\rangle
$$
generated by all these conjugations, and we show (Proposition~\ref{prop2}) that $H_i$ is a direct sum of its subgroups $K_i^{(k)}$, $k\in\mathbb{Z}$, which gives the isomorphism $H_i\simeq \bigoplus_\mathbb{Z}K$. Further, we show (Lemma~\ref{lem5}) that the group $H_i$ intersects trivially with the cyclic group generated by $a_i$ and that this cyclic group acts on $H_i$ by conjugation via the shift (Lemma~\ref{lem4}). This implies that the group
$$
G_i=\langle H_i, a_i\rangle=\langle c_{1, i}, \ldots, c_{n,i}, a_i\rangle
$$
is a semidirect product of the subgroups $H_i$ and $\langle a_i\rangle$, and, consequently, that $G_i$ is isomorphic with the lamplighter group $K\wr \mathbb{Z}$ via the mapping $a_i\mapsto u$, $c_{s, i}\mapsto \eta_s$, $1\leq s\leq n$, where $\{u, \eta_1,\ldots,\eta_n\}$ is the standard generating set of $K\wr \mathbb{Z}$, that is $u$ is a generator of $\mathbb{Z}$ in the  product $\bigoplus_\mathbb{Z}K\rtimes \mathbb{Z}$ and
$$
\eta_s=(\ldots,{\bf 0},{\bf 0},{\bf 0},\kappa_s,{\bf 0},{\bf 0},{\bf 0},\ldots)\in \bigoplus_\mathbb{Z}K,
$$
where each $\kappa_s$ on the right hand side occurs in zero position. Finally, we obtain that the group
$$
G(\mathcal{A}_{i})=\langle a_i, (b_1)_i, \ldots, (b_n)_i\rangle
$$
generated by the $i$-th shift of the automaton $\mathcal{A}$ is isomorphic with $G_i$ and the mapping $a_i\mapsto u$, $(b_s)_i\mapsto \eta_s\cdot u$ induces the isomorphism $G(\mathcal{A}_{i})\simeq K\wr \mathbb{Z}$.

As we will see in the proof, all calculations verifying the above propositions and lemmas are entirely elementary. However, they are burdened with some technical formulae and  the  presentation of the proof  seems to be quite formalistic. This  contrasts with a transparent and intuitive idea behind the definition of the automaton $\mathcal{A}$. On the other hand, our construction confirms one of the most intriguing phenomenon in the computational group theory which originates from dealing with the simplest examples of transducers, i.e. although  they  can generate various classic and important groups with rare and exotic properties, the derivation of  such a  group  from the corresponding automaton is  far from obvious and, quite often, requires  sophisticated and complex approaches. It can be seen, for example, in the original proof from~\cite{2} finding the lamplighter group $C_2\wr\mathbb{Z}$ as a self-similar group or in a long history discovering a free group  behind the Aleshin type automaton.

We hope that our  construction will not only fill the gap in self-similar automaton representations for a large class of lamplighter groups, but also  can serve as a useful tool in the further study of these groups, in particular, via their dynamics and geometry on the corresponding tree. We also think that it will help to better understand the connection between the structure of an automaton over a changing alphabet and the group it defines.

\section{The tree $X^*$ and its automaton transformations}
In the Introduction, we have already presented the notion of a changing alphabet $X=(X_i)_{i\geq 0}$, a time-varying automaton $A=(X, Q, \varphi, \psi)$, their shifts $X_{(k)}$, $A_k$ ($k\geq 0$)  and a self-similar group over a changing alphabet. Now, we  only recall the necessary notions and facts concerning the tree $X^*$ of finite words over $X$ and  automaton transformations $q_k\colon X^*_{(k)}\to X^*_{(k)}$ ($q\in Q$, $k\geq 0$).
We define the tree  $X^*$ as a disjoint sum of cartesian products
$$
X_0\times X_{1}\times\ldots\times X_{i},\;\;\;i\geq 0,
$$
together with the empty word $\epsilon$.  Since the elements of a word $w\in X^*$ are called letters, we will not separate them by commas and we will write $w=x_0x_1\ldots x_k$, where $x_i\in X_{i}$  are the letters. Thus for a given $i\geq 0$ the above cartesian product  consists of all words $w\in X^*$ with the length $|w|=i+1$. If $w\in X^*$ and $v\in X^*_{(|w|)}$, then $wv$ denotes a concatenation of  $w$ and $v$. Obviously $wv\in X^*$.
The set  $X^*$  has the structure of a spherically homogeneous rooted tree. The empty word is the root of this tree and two words are connected if and only if they are of the form $w$ and $wx$ for some $w\in X^*$ and  $x\in X_{|w|}$. In particular, the $i$-th level $X^i$ ($i\geq0$)  of the tree $X^*$ (that is the set of vertices at the distance $i$ from the root) consists of all words of the length $i$.

There is a natural interpretation of an automaton $A=(X, Q, \varphi, \psi)$ over a changing alphabet $X=(X_i)_{i\geq 0}$ as a machine, which being in a moment $i\geq 0$ in a state $q\in Q$ reads a letter $x\in X_i$ from the input tape, types the letter $\psi_i(q, x)$ on the output tape, moves to the state $\varphi_i(q, x)$ and proceeds further to the moment $i+1$. This  allows to define for every $k\geq0$ and  $q\in Q$     the transformation $q_k\colon X^*_{(k)}\to X^*_{(k)}$ recursively: $q_k(\epsilon)=\epsilon$ and if $xw\in X^*_{(k)}$ with $x\in X_k$, $w\in X^*_{(k+1)}$, then
\begin{equation}\label{0}
q_k(xw)=x'q'_{k+1}(w),
\end{equation}
where $x'=\psi_k(q,x)$ and $q'=\varphi_k(q, x)$. The transformation $q_k$ is called an automaton transformations of the tree $X^*_{(k)}$ (corresponding to the state $q$ in the $k$-th transition of $A$).

It is convenient to present an automaton $A=(X, Q, \varphi, \psi)$ over a changing alphabet $X=(X_i)_{i\geq 0}$ as a labeled directed locally finite graph with the  set
$$
\{(i, q)\colon i\geq0,\; q\in Q\}
$$
as a set of vertices. Two vertices are connected with an arrow if and only if they are of the form $(i, q)$ and $(i+1, \varphi_i(q, x))$ for some $i\geq0$, $q\in Q$ and $x\in X_i$. This arrow is labeled by $x$, starts from the vertex $(i, q)$ and goes to the vertex $(i+1, \varphi_i(q, x))$. Each vertex $(i, q)$ is labeled by the mapping
$$
\tau_{i, q}\colon X_i\to X_i,\;\;\;\tau_{i,q}\colon x\mapsto \psi_i(q,x)
$$
(the so-called labelling of the state $q$ in the $i$-th transition of $A$).   Further, to make the graph of the automaton clear,  we will substitute a large number of arrows connecting two given vertices and having  the same direction for a one multi-arrow labeled by suitable letters and if the labelling of such a multi-arrow starting from a given vertex follows from the labelling of other arrows starting from this vertex, we will omit this labelling.
\begin{figure}[hbtp]
\centering
\includegraphics[width=9cm]{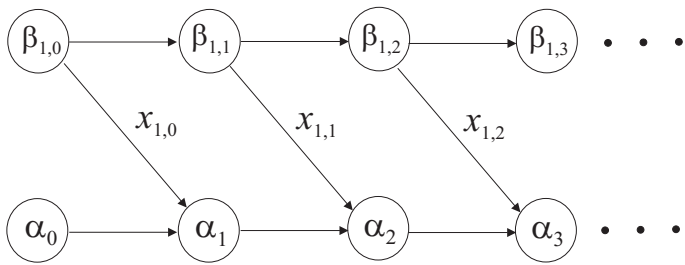}
\caption{The graph of the automaton $\mathcal{A}$ generating $K\wr \mathbb{Z}$, case $n=1$}
\label{fig14}
\end{figure}
Directly by the formula~(\ref{0}) we see that in case  $A$ invertible (i.e. each $\tau_{i, q}$ is a permutation of the set $X_i$), every automaton transformation $q_i\colon X^*_{(i)}\to X^*_{(i)}$ is also invertible. By the formula (\ref{0}) we also see that $q_i$  preserves the lengths of words and common beginnings and hence it defines an endomorphisms of the tree $X^*_{(i)}$. Moreover, the composition of two automaton transformations of the same tree is also an automaton transformation of this tree and the inverse mapping to the invertible automaton transformation is also of this type (we can show this by using an operation of the composition  of two time-varying automata over the same  changing alphabet and the notion of an inverse automaton - see~\cite{11}). In particular, the set of all invertible automaton transformations of a tree $X^*$ together with the composition of mappings as a product forms a proper subgroup of the group $Aut(X^*)$ of automorphisms of this tree, which is denoted by  $FA(X^*)$ and called the group of finite-state automorphisms.

\begin{rem}
Through the rest of the text we use the right action convention in writing the composition $fg$ of two transformation $f, g\colon X^*\to X^*$, i.e. the transformation applied first is written on the left and for any word $w\in X^*$ we have the equality $fg(w)=g(f(w))$.
\end{rem}

\section{The proof of Theorem~\ref{t1}}
Since the group $G$ is countable and residually finite, there is  an infinite normal series
$$
G=N_0\triangleright N_1\triangleright\ldots
$$
with finite quotients $G/N_i$ and the trivial intersection. Then the function
$g\mapsto (gN_i)_{i\geq 0}$
is an embedding of $G$ into the infinite cartesian product
\begin{equation}\label{cp}
G/N_0\times G/N_1\times\ldots.
\end{equation}
Let $X=(X_i)_{i\geq 0}$ be an unbounded changing alphabet. Then there is an infinite increasing sequence $0<t_0<t_1<\ldots$ of integers such that for every $i\geq 0$ the quotient $G/N_i$ embeds into the product
$$
S(X_{t_{i-1}+1})\times S(X_{t_{i-1}+2})\times\ldots\times S(X_{t_i})
$$
of  symmetric groups of  consecutive sets of letters, where for $i=0$ we take $t_{(-1)}=-1$. Consequently, there is an
embedding
$$
g\mapsto (g^{(0)}, g^{(1)}, \ldots),\;\;\;g\in G
$$
of  $G$ into the cartesian product $S(X_0)\times S(X_1)\times\ldots$.  The above embedding  allows to define the transformations $\xi_{g,i}\colon X^*_{(i)}\to X^*_{(i)}$ ($i\geq 0$, $g\in G$) as follows: $\xi_{g,i}(\epsilon)=\epsilon$ and if $w=x_0x_1\ldots x_k\in X^*_{(i)}$, then
$$
\xi_{g,i}(w)=g^{(i)}(x_0)g^{(i+1)}(x_1)\ldots g^{(i+k)}(x_k).
$$
In particular, for all $g, g'\in G$ and  $i\geq 0$ we have
\begin{equation}\label{epim}
\xi_{gg',i}=\xi_{g,i}\cdot \xi_{g',i},\;\;\;\xi_{g^{-1},i}=(\xi_{g,i})^{-1}.
\end{equation}

Let $S$ be any generating set of $G$ whose cardinality coincides with the rank of $G$. Let us define a time-varying automaton $A=(X, S, \varphi, \psi)$ as follows:
$$
\varphi_i(s, x)=s,\;\;\;\psi_i(s,x)=s^{(i)}(x)
$$
for all $i\geq 0$, $s\in S$, $x\in X_i$. Let us fix $i\geq 0$. Directly by the definition of $A$ the  transformation $s_i$ defined by a state $s\in S$ in the $i$-th transition of $A$ acts on any word $w=x_0x_1\ldots x_k\in X^*_{(i)}$ in the following way
$$
s_i(w)=s^{(i)}(x_0)s^{(i+1)}(x_1)\ldots s^{(i+k)}(x_k).
$$
In particular, we have $s_i=\xi_{s,i}$ for every $s\in S$ and, since the elements $s_i$ for  $s\in S$ generate the group $G(A_i)$, we have
$$
G(A_i)=\{\xi_{g,i}\colon g\in G\}.
$$
By the equalities~(\ref{epim}) we see that the mapping
$$
G\to G(A_i),\;\;\;g\mapsto \xi_{g,i}
$$
is an epimorphism. Further, since $\bigcap_{k\geq 0}N_k=\{1_G\}$, for every $g\neq 1_G$ there are infinitely many $k\geq 0$ such that $gN_k\neq N_k$. Consequently $g^{(k)}\neq Id_{X_k}$ for infinitely many  $k\geq 0$, which  implies $\xi_{g,i}\neq Id_{X^*_{(i)}}$ for every $g\neq 1_G$. Hence the above epimorphism is one-to-one. Consequently, the mapping $s\mapsto s_i$ ($s\in S$) induces an isomorphism $G\simeq G(A_i)$, i.e. the automaton $A$ is an optimal self-similar realization of $G$.

As we explained in the introduction, in spite of Theorem~\ref{t1}, it is  not easy to find an explicit self-similar automaton realization for a given group $G$. Even if we are able to construct an automaton defining $G$, this automaton may not be necessarily self-similar.

\begin{ex}\label{ex2}
Let $(r_i)_{i\geq 0}$ be a nondecreasing unbounded sequence of integers $r_i>1$ and let $X=(X_i)_{i\geq0}$ be a changing alphabet in which $X_i=\{1,2, \ldots, r_i\}$. For each $i\geq 0$   we define two cyclic permutations $\alpha_i$, $\beta_i$ of the set $X_i$: $\alpha_i=(1,2)$, $\beta_i=(1,2\ldots,r_i)$.
Let $A=(X, \{a, b\},  \varphi, \psi)$ be an automaton in which the transition and output functions are defined as follows
$$
\varphi_i(q, x)=\left\{
\begin{array}{ll}
q, &x\neq1,\\
a, &x=1,\;q=b,\\
b,& x=1,\; q=a,
\end{array}
\right.\;\;\;\psi_i(q, x)=\left\{
\begin{array}{ll}
\alpha_i(x), &q=a,\\
\beta_i(x), &q=b.
\end{array}
\right.
$$
\begin{figure}[hbtp]
\centering
\includegraphics[width=9.5cm]{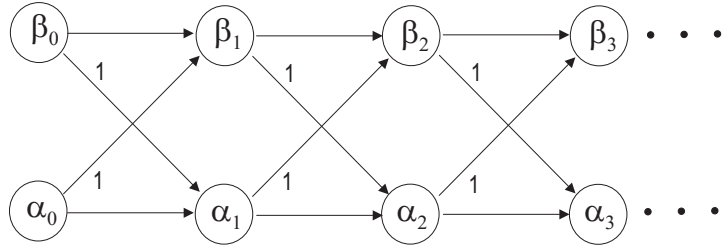}
\caption{A self-similar automaton from Example~\ref{ex2}}
\label{fig11}
\end{figure}
According to~\cite{15}  the group $G(A)=\langle a_0, b_0\rangle$ is  a free nonabelian group of rank two generated freely by the transformations $a_0$, $b_0$. Directly by the construction of the automaton  $A$ we see that for every $i\geq 0$ the  automaton transformations $a_i, b_i\colon X^*_{(i)}\to X^*_{(i)}$ also generate a free nonabelian group of rank 2. In particular, the mapping $a_i\mapsto a_0$, $b_i\mapsto b_0$ induces an isomorphism $G(A_{i})\simeq G(A)$. Thus the automaton $A$ is a self-similar realization of a free nonabelian group of rank 2 over the changing alphabet $X$.
\end{ex}

\begin{ex}\label{ex1}
Let the changing alphabet $X=(X_i)_{i\geq0}$ and the permutations $\alpha_i, \beta_i\in S(X_i)$ be defined as in the previous example. Let $A=(X, \{a, b\}, \varphi, \psi)$ be an automaton of a diagonal type with the output functions defined as follows: $\psi_i(a, x)=\alpha_i(x)$ and $\psi_i(b, x)=\beta_i(x)$ for all $i\geq 0$ and $x\in X_i$.
\begin{figure}[hbtp]
\centering
\includegraphics[width=9cm]{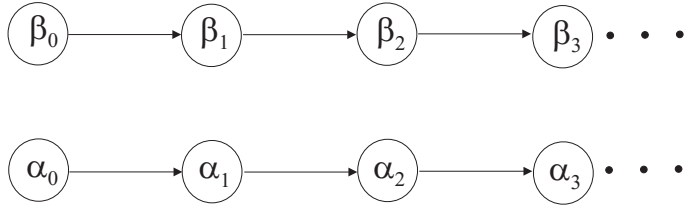}
\caption{An  automaton from Example~\ref{ex1}}
\label{fig12}
\end{figure}
The automaton $A$ is not a self-similar  realization of the group $G=G(A)$ because $A$ is not self-similar. Indeed, let $t\geq 0$ be the smallest number such that $r_{t}\geq 3$ and  $t'> t$ be the smallest number such that $r_{t'}>r_t$.
Then one  can verify by straightforward calculations that  $W(a_{t'}, b_{t'})\neq Id_{X^*_{(t')}}$ and $W(a_0, b_0)=Id_{X^*}$, where the group-word $W=W(a,b)$ is defined as follows $W(a,b)=(ab^{r_t-1}ab^{-r_t+1})^2$.
Thus the mapping $a_{t'}\mapsto a_{0}$, $b_{t'}\mapsto b_{0}$ can not be extended to the isomorphism of groups $G(A_{t'})$ and $G(A)$.
\end{ex}

\begin{ex}\label{ex0}
Let $\mathcal{D}$ be an automaton of a diagonal type defined in the introduction. For every $i\geq 0$ let $a_i$, $(b_s)_i$ ($1\leq s\leq n$) be automaton transformations of the tree $X^*_{(i)}$ corresponding to  the states of  $\mathcal{D}$ in  $i$-th transition. Obviously, the automaton $\mathcal{D}$ is invertible. By  definition of transition and output functions in $\mathcal{D}$ we obtain the following recursions: $a_i(xw)=\alpha_i(x)a_{i+1}(w)$, $(b_s)_i(xw)=\beta_{s,i}(x)b_{i+1}(w)$
for every $x\in X_i$ and $w\in X^*_{(i+1)}$. The group $G(\mathcal{D}_i)$ generated by the $i$-th shift of the automaton $\mathcal{D}$ is generated by the transformations $a_i$ and $c_{s,i}=(b_s)_ia_i^{-1}$ for $1\leq s\leq n$. Since $\sigma_{s,i}=\beta_{s,i}\alpha_i^{-1}=\alpha_i^{-1}\beta_{s,i}$, we have by the above recursions:
$c_{s,i}(xw)=\sigma_{s,i}(x)c_{s,i+1}(w)$ for  $x\in X_i$ and $w\in X^*_{(i+1)}$.
We consider in the infinite cartesian product $S(X_0)\times S(X_1)\times\ldots$  the subgroup $\langle \lambda_i, \gamma_{1,i}, \ldots, \gamma_{n,i}\rangle$,
where
$$
\lambda_i=(\alpha_i, \alpha_{i+1}, \ldots),\;\;\;\gamma_{s,i}=(\sigma_{s,i}, \sigma_{s, i+1},\ldots).
$$
By the above recursions for $a_i$ and $c_{s, i}$ we see that the mapping $a_i\mapsto \lambda_i$, $c_{s, i}\mapsto \gamma_{s,i}$, $1\leq s\leq n$ induces an isomorphism $G(\mathcal{D}_i)\simeq \langle \lambda_i, \gamma_{1,i}, \ldots, \gamma_{n,i}\rangle$. For each $i\geq 0$ the permutations $\alpha_i, \sigma_{1,i}, \ldots, \sigma_{n, i}$ are pairwise disjoint. Consequently,  the group $\langle \lambda_i, \gamma_{1,i}, \ldots, \gamma_{n,i}\rangle$ is abelian and the product $\lambda_i^{m_0}\gamma_{1,i}^{m_1}\ldots\gamma_{n,i}^{m_n}$ is trivial if and only if $o(\lambda_i)\mid m_0$ and $o(\gamma_{s,i})\mid m_s$ for $1\leq s\leq n$, where $o(g)$ denotes the order of an element $g$. Since $\alpha_i=\pi_{1,i}\cdot\ldots\cdot\pi_{n, i}$, we  see by  the assumptions on the permutations $\pi_{s,i}$ and $\sigma_{s,i}$ that $o(\lambda_i)=o(\gamma_{1,i})=\ldots=o(\gamma_{n_1,i})=\infty$ and $o(\gamma_{s,i})=r_{s-n_1}$ for $n_1<s\leq n$. This implies that the automaton $\mathcal{D}$ is self-similar and it generates the group isomorphic with $K\times \mathbb{Z}$.
\end{ex}

\section{The proof of Theorem~\ref{t2}}
Let $\mathcal{A}$ be an automaton defined in the introduction and for every $i\geq 0$ let $a_i$, $(b_s)_i$ ($1\leq s\leq n$) be automaton transformations of the tree $X^*_{(i)}$ corresponding to  the states of $\mathcal{A}$ in its $i$-th transition.
 Obviously, the automaton $\mathcal{A}$ is invertible. Directly by formulae defining transition and output functions of $\mathcal{A}$  we obtain the following recursions:
$$
a_i(xw)=\alpha_i(x)a_{i+1}(w),\;\;\;(b_s)_i(xw)=\left\{\begin{array}{ll}\beta_{s,i}(x)a_{i+1}(w),& x=x_{s,i},\\
\beta_{s, i}(x)(b_s)_{i+1}(w),& x\neq x_{s, i}\end{array}\right.
$$
for every $x\in X_i$ and $w\in X^*_{(i+1)}$. Hence $a_i^{-1}(xw)=\alpha_i^{-1}(x)a^{-1}_{i+1}(w)$ and the transformations $c_{s,i}=(b_s)_ia_i^{-1}$ satisfy:
\begin{eqnarray*}
c_{s, i}(xw)&=&(b_s)_ia_i^{-1}(xw)=a_i^{-1}((b_s)_i(xw))=\\
&=&\left\{
\begin{array}{ll}
a_i^{-1}(\beta_{s, i}(x)a_{i+1}(w)),&x=x_{s, i},\\
a_i^{-1}(\beta_{s, i}(x)(b_s)_{i+1}(w)), &x\neq x_{s, i},
\end{array}
\right.=\\
&=&\left\{
\begin{array}{ll}
\alpha_i^{-1}(\beta_{s, i}(x))a^{-1}_{i+1}(a_{i+1}(w)),&x=x_{s, i},\\
\alpha_i^{-1}(\beta_{s, i}(x))a^{-1}_{i+1}((b_s)_{i+1}(w)), &x\neq x_{s, i},
\end{array}
\right.=\\
&=&\left\{
\begin{array}{ll}
\sigma_{s, i}(x)w,&x=x_{s, i},\\
\sigma_{s, i}(x)c_{s, i+1}(w), &x\neq x_{s, i}.
\end{array}
\right.
\end{eqnarray*}
But, if $x=x_{s, i}$, then $\sigma_{s, i}(x)=x$ as the cycles $\pi_{s,i}$ and $\sigma_{s,i}$ are disjoint. In consequence, we have
\begin{equation}\label{5}
c_{s,i}(xw)=\left\{\begin{array}{ll}xw,& x=x_{s,i},\\
\sigma_{s, i}(x)c_{s, i+1}(w),& x\neq x_{s, i}.\end{array}\right.
\end{equation}

\subsection{The groups $K_i$}
We consider the groups
$$
K_i=\langle c_{1, i}, \ldots, c_{n, i}\rangle,\;\;\;i\geq 0.
$$

\begin{prop}\label{prop1}
For every $i\geq 0$ the mapping $c_{s, i}\mapsto \kappa_s$ ($1\leq s\leq n$) induces an isomorphism  $K_i\simeq K$.
\end{prop}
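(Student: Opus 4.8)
The plan is to realise the stated map as an isomorphism by exhibiting first the homomorphism $\phi\colon K\to K_i$, $\kappa_s\mapsto c_{s,i}$, and then proving it is bijective. Writing $K$ through its standard presentation as the abelian group on generators $\kappa_1,\dots,\kappa_n$ subject to the commutator relations together with $\kappa_s^{r_{s-n_1}}=1$ for $n_1<s\le n$, this reduces to three checks: that the $c_{s,i}$ pairwise commute, that each $c_{s,i}$ has exactly the order of $\kappa_s$ (infinite for $s\le n_1$ and $r_{s-n_1}$ for $s>n_1$), and that the resulting surjection has trivial kernel. Every step will be read off the single recursion~(\ref{5}) together with the standing facts that the $2n$ cycles $\pi_{1,i},\dots,\pi_{n,i},\sigma_{1,i},\dots,\sigma_{n,i}$ are pairwise disjoint and that each distinguished letter $x_{s,i}$ lies in $\pi_{s,i}$, hence outside the support of every $\sigma_{s',i}$ and distinct from every $x_{s',i}$.

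First I would prove commutativity by induction on the length of the input word. Fix $s\neq s'$ and a word $x_0w$ and split according to whether $x_0$ equals $x_{s,i}$ or $x_{s',i}$. These two letters are distinct, and each of $\sigma_{s,i},\sigma_{s',i}$ fixes the distinguished letter of the other index, so in the cases where $x_0$ is one of the distinguished letters a direct evaluation of~(\ref{5}) reduces both $c_{s,i}c_{s',i}(x_0w)$ and $c_{s',i}c_{s,i}(x_0w)$ to the same expression $x_0\cdot c_{s',i+1}(w)$ (resp. $x_0\cdot c_{s,i+1}(w)$), with no induction needed. In the remaining case the first letter is transformed by $\sigma_{s,i}\sigma_{s',i}=\sigma_{s',i}\sigma_{s,i}$, disjoint cycles commuting, and the tail by $c_{s,i+1}c_{s',i+1}$ resp. $c_{s',i+1}c_{s,i+1}$, which agree by the induction hypothesis on the shorter word $w$. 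Hence $K_i$ is abelian and $\phi$ respects the commutator relations.

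Next, iterating~(\ref{5}) and using that $\sigma_{s,i}$ fixes $x_{s,i}$, so that a letter distinct from $x_{s,i}$ is never mapped onto $x_{s,i}$, I obtain the clean formula $c_{s,i}^m(x_0w)=x_0w$ if $x_0=x_{s,i}$ and $c_{s,i}^m(x_0w)=\sigma_{s,i}^m(x_0)\,c_{s,i+1}^m(w)$ otherwise. A short induction on word length then shows $c_{s,i}^m=\mathrm{Id}$ if and only if $\sigma_{s,j}^m=\mathrm{Id}$ for all $j\ge i$, so $o(c_{s,i})=\mathrm{lcm}\{o(\sigma_{s,j}):j\ge i\}$. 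By the hypotheses on the cycle lengths this value is infinite for $1\le s\le n_1$ and equals $r_{s-n_1}$ for $n_1<s\le n$, i.e. precisely $o(\kappa_s)$; in particular the torsion relations $\kappa_s^{r_{s-n_1}}=1$ also hold for the $c_{s,i}$, so $\phi$ is a well-defined epimorphism.

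The crux, and the step I expect to be the main obstacle, is injectivity of $\phi$. As $K_i$ is abelian, a general kernel element has the form $g=\prod_s c_{s,i}^{m_s}$, and from $g=\mathrm{Id}$ I must deduce $o(\kappa_s)\mid m_s$ for every $s$. The idea is to evaluate $g$ on a word whose first letter $x_0$ lies in the support of some $\sigma_{t,i}$: such an $x_0$ is non-distinguished, and every $\sigma_{s,i}$ keeps it non-distinguished, so the freezing clause of~(\ref{5}) is never triggered throughout the commuting product, giving $g(x_0w)=\bigl(\prod_s\sigma_{s,i}^{m_s}\bigr)(x_0)\cdot\bigl(\prod_s c_{s,i+1}^{m_s}\bigr)(w)$. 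From $g=\mathrm{Id}$ I first get $\prod_s\sigma_{s,i}^{m_s}=\mathrm{Id}$, whence $\sigma_{s,i}^{m_s}=\mathrm{Id}$ for each $s$ by disjointness, and simultaneously that the level-shifted element $\prod_s c_{s,i+1}^{m_s}$ is again the identity. Repeating at every level yields $\sigma_{s,j}^{m_s}=\mathrm{Id}$, i.e. $o(\sigma_{s,j})\mid m_s$, for all $j\ge i$; the unboundedness of the lengths of $\sigma_{s,j}$ for $s\le n_1$ then forces $m_s=0$, while the constant length $r_{s-n_1}$ for $s>n_1$ gives $r_{s-n_1}\mid m_s$. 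In both cases $\kappa_s^{m_s}=1$, so $\prod_s\kappa_s^{m_s}=1$ in $K$ and $\phi$ is injective. The delicate points are the bookkeeping that the non-distinguished first letter is preserved through the entire product, and the level-by-level descent, which is exactly where the global unbounded-versus-constant behaviour of the cycle lengths is used.
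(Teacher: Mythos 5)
Your proposal is correct and takes essentially the same route as the paper: the same case analysis on the recursion~(\ref{5}) for commutativity, the same power formula~(\ref{9}) for the orders of the $c_{s,i}$, and the same injectivity mechanism of evaluating the product on letters lying outside the distinguished set $\{x_{1,i},\ldots,x_{n,i}\}$ (so that only $\prod_s\sigma_{s,i}^{m_s}$ acts on the first letter, by disjointness of the cycles), then exploiting the constant lengths $r_{s-n_1}$ versus the unbounded lengths of the cycles $\sigma_{s,j}$. The only difference is organizational: where the paper proves Lemma~\ref{lem1} for the products $C_{M,i}$ and evaluates on a single explicit word threading non-distinguished letters down to a deep level $i_0$ with a long cycle, you iterate a one-level factorization to pass from $\prod_s c_{s,j}^{m_s}=\mathrm{Id}$ to $\sigma_{s,j}^{m_s}=\mathrm{Id}$ together with $\prod_s c_{s,j+1}^{m_s}=\mathrm{Id}$ -- the same argument phrased as a level-by-level descent.
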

\begin{proof}
At first we show that for $s\neq s'$ the transformations $c_{s, i}$ and $c_{s', i}$ commute.  By  (\ref{5})    we obtain:
\begin{eqnarray}
c_{s, i}c_{s',i}(xw)&=&c_{s',i}(c_{s,i}(xw))=\nonumber\\
&=&\left\{\begin{array}{ll}
c_{s',i}(xw),&x=x_{s,i},\\
c_{s',i}(\sigma_{s,i}(x)c_{s,i+1}(w)), &x\neq x_{s, i}.\label{10}
\end{array}\right.
\end{eqnarray}
If $x=x_{s, i}$, then $x\neq x_{s',i}$ and $\sigma_{s',i}(x)=x$. Thus by (\ref{5}) and (\ref{10})  we have  in this case
$$
c_{s, i} c_{s',i}(xw)=c_{s', i}(xw)=\sigma_{s',i}(x)c_{s', i+1}(w)=xc_{s',i+1}(w).
$$
Similarly, if $x=x_{s',i}$, then  $\sigma_{s, i}(x)=x$ and by (\ref{5}) and (\ref{10}) we obtain in this case
$$
c_{s, i} c_{s',i}(xw)=c_{s',i}(\sigma_{s,i}(x)c_{s,i+1}(w))=c_{s',i}(xc_{s, i+1}(w))=xc_{s, i+1}(w).
$$
If $x\neq x_{s, i}$ and $x\neq x_{s',i}$, then $\sigma_{s, i}(x)\neq x_{s',i}$ and consequently
$$
c_{s, i} c_{s',i}(xw)=c_{s',i}(\sigma_{s,i}(x)c_{s,i+1}(w))=\sigma_{s, i}\sigma_{s',i}(x)c_{s, i+1}c_{s',i+1}(w).
$$
We conclude
\begin{equation}\label{14}
c_{s, i} c_{s',i}(xw)=\left\{\begin{array}{ll}
xc_{s',i+1}(w),&x=x_{s,i},\\
xc_{s,i+1}(w), &x=x_{s',i},\\
\sigma_{s,i}\sigma_{s',i}(x)c_{s,i+1}c_{s',i+1}(w), & \mbox{\rm otherwise}.
\end{array}\right.
\end{equation}
By analogy we have
$$
c_{s', i} c_{s,i}(xw)=\left\{\begin{array}{ll}
xc_{s',i+1}(w),&x=x_{s,i},\\
xc_{s,i+1}(w), &x=x_{s',i},\\
\sigma_{s',i} \sigma_{s,i}(x)c_{s',i+1} c_{s,i+1}(w), &\mbox{\rm otherwise}.
\end{array}\right.
$$
Thus we may write
\begin{equation}\label{eq111}
c_{s, i} c_{s',i}(xw)=\left\{
\begin{array}{ll}
c_{s',i} c_{s, i}(xw),& x\in\{x_{s, i}, x_{s',i}\},\\
\sigma_{s, i}\sigma_{s',i}(x)c_{s,i+1} c_{s',i+1}(w),&\mbox{\rm otherwise}.
\end{array}
\right.
\end{equation}
Since the cycles $\sigma_{s,i}$ and $\sigma_{s',i}$  commute, we see by~(\ref{eq111}) that  the transformations $c_{s, i}$ and $c_{s', i}$  commute as well.

If $x\neq x_{s,i}$, then $\sigma_{s,i}^{-1}(x)\neq x_{s,i}$ and by the recursion (\ref{5}) we have:
$$
c^{-1}_{s,i}(xw)=\left\{\begin{array}{ll}xw,& x=x_{s,i},\\
\sigma^{-1}_{s, i}(x)c^{-1}_{s, i+1}(w),& x\neq x_{s, i}\end{array}\right.
$$
By easy inductive argument we obtain for any integer $k$:
\begin{equation}\label{9}
c^k_{s,i}(xw)=\left\{\begin{array}{ll}xw,& x=x_{s,i},\\
\sigma_{s, i}^k(x)c^k_{s, i+1}(w),& x\neq x_{s, i}.\end{array}\right.
\end{equation}
By our assumption, for each $1\leq s\leq n_1$ the  cycles $\sigma_{s, i}$ ($i\geq 0$) do not have a uniformly bounded length. Hence, by  the  recursion (\ref{9}) we see  that the transformation $c_{s, i}$ is of infinite order for every $1\leq s\leq n_1$. Similarly, if $n_1<s\leq n$, then the length of  $\sigma_{s, i}$ is equal to $r_{s-n_1}$ and consequently the order of $c_{s, i}$ is equal to $r_{s-n_1}$.

We define the subset $\mathcal{V}\subseteq \mathbb{Z}^n$ as follows:
$$
\mathcal{V}=\{(m_1, \ldots, m_n)\colon 0\leq m_s<r_{s-n_1}\;\;{\rm for}\;\;n_1<s\leq n\}.
$$
By above, we obtain the equality
$$
K_i=\{C_{M, i}\colon M\in \mathcal{V}\},
$$
where for  every $M=(m_1,\ldots, m_n)\in\mathcal{V}$ we define
$$
C_{M,i}=c_{1, i}^{m_1}\cdot\ldots \cdot c_{n, i}^{m_n}.
$$
We show  that the above presentation is unique. To this end  to each $C_{M,i}$ we associate the permutation $\Sigma_{M, i}\in Sym(X_i)$ defined  as follows:
$$
\Sigma_{M, i}=\sigma_{1, i}^{m_1}\cdot\ldots\cdot\sigma_{n_,i}^{m_n}.
$$
In the following lemma, which is a generalization of the recursion (\ref{9}), for every $M\in\mathcal{V}$ we denote by $M^{(s)}$ ($1\leq s\leq n$)   the element arising from $M$ by the substitution of $m_s$ for $0$.
\begin{lem}\label{lem1}
We have
\begin{equation}\label{6}
C_{M, i}(xw)=\left\{
\begin{array}{ll}
xC_{M^{(s)}, i+1}(w), &x=x_{s,i},\\
\Sigma_{M, i}(x)C_{M, i+1}(w), &otherwise
\end{array}
\right.
\end{equation}
for all $x\in X_i$,  $w\in X^*_{(i+1)}$ and $M\in\mathcal{V}$.
\end{lem}
\begin{proof}(of Lemma~\ref{lem1})
Let $M=(m_1, \ldots, m_n)$ and for every $1\leq s\leq n$ let $M_s\in \mathcal{V}$  arises from $M$ by the substitution of every component $m_i$ with $i>s$ for 0. Since $M_n=M$ and for every $1\leq s\leq n$ we have
$$
C_{M_s, i}=c_{1, i}^{m_1}\cdot\ldots \cdot c_{s, i}^{m_s},
$$
it is sufficient to show that for every $1\leq s\leq n$ the following recursion holds:
\begin{equation}\label{11}
C_{M_s, i}(xw)=\left\{
\begin{array}{ll}
xC_{M^{(s')}_s, i+1}(w), &x=x_{s',i},\;1\leq s'\leq s,\\
\Sigma_{M_s, i}(x)C_{M_s, i+1}(w), &\mbox{\rm otherwise}.
\end{array}
\right.
\end{equation}
For $s=1$ we have $C_{M_s,i}=C_{M_1,i}=c_{1, i}^{m_1}$ and (\ref{11}) coincides with the recursion~(\ref{9}). Let us fix $s>1$ and let us assume that (\ref{11}) is true for  $s-1$. Then we have
\begin{eqnarray*}
C_{M_s, i}(xw)&=&c_{s, i}^{m_s}(C_{M_{s-1}, i}(xw))=\\
&=&\left\{\begin{array}{ll}
c_{s, i}^{m_s}(xC_{M_{s-1}^{(s')},i+1}(w)), & x=x_{s', i},\;\;1\leq s'<s,\\
c_{s, i}^{m_s}(\Sigma_{M_{s-1},i}(x)C_{M_{s-1}, i+1}(w)), &\mbox{\rm otherwise}.
\end{array}\right.
\end{eqnarray*}
But, if $x=x_{s',i}$ for some $1\leq s'<s$, then $\sigma_{s, i}^{m_s}(x)=x$. Thus by the recursion~(\ref{9}) we have in this case
$$
c_{s, i}^{m_s}(xC_{M_{s-1}^{(s')},i+1}(w))=\sigma_{s, i}^{m_s}(x)c_{s, i+1}^{m_s}(C_{M_{s-1}^{(s')},i+1}(w))=
xC_{M_s^{(s')}, i+1}(w).
$$
If $x=x_{s, i}$, then $\Sigma_{M_{s-1},i}(x)=\sigma^{m_1}_{1,i}\cdot\ldots\cdot \sigma^{m_{s-1}}_{s-1,i}(x)=x$. Thus by the recursion~(\ref{9}) we have in this case
\begin{eqnarray*}
C_{M_s, i}(xw)&=&c_{s, i}^{m_s}(\Sigma_{M_{s-1},i}(x)C_{M_{s-1}, i+1}(w))=\\
&=&c_{s, i}^{m_s}(xC_{M_{s-1},i+1}(w))=\\
&=&xC_{M_{s-1},i+1}(w)=\\
&=&xC_{M_s^{(s)},i+1}(w).
\end{eqnarray*}
If $x\notin\{x_{1 i},\ldots, x_{s, i}\}$, then $\Sigma_{M_{s-1}, i}(x)\neq \Sigma_{M_{s-1}, i}(x_{s, i})=x_{s,i}$. Thus by the recursion~(\ref{9}) we have in this case
\begin{eqnarray*}
C_{M_s, i}(xw)&=&c_{s, i}^{m_s}(\Sigma_{M_{s-1},i}(x)C_{M_{s-1}, i+1}(w))=\\
&=&\sigma_{s, i}^{m_s}(\Sigma_{M_{s-1},i}(x))c_{s, i+1}^{m_s}(C_{M_{s-1}, i+1}(w))=\\
&=&\Sigma_{M_{s},i}(x)C_{M_{s}, i+1}(w).
\end{eqnarray*}
To sum up, we conclude the recursion (\ref{11}), which finishes the proof of the lemma.
\end{proof}

Now, let us assume that the product $C_{M, i}=c_{1, i}^{m_1}\cdot\ldots \cdot c_{n, i}^{m_n}$ represents the identity mapping for some $M=(m_1,\ldots, m_n)\in~\mathcal{V}$.
Let $n_1<s\leq n$. Since there is $x_0\in X_i$ which belongs to the cycle $\sigma_{s, i}$ and $x_0\notin\{x_{1, i},\ldots, x_{n, i}\}$, we have by Lemma~\ref{lem1}: $x_0=C_{M,i}(x_0)=\Sigma_{M, i}(x_0)=\sigma_{s, i}^{m_s}(x_0)$. Thus the divisibility $r_{s-n_1}\mid m_s$ holds and consequently $m_s=0$. If $1\leq s\leq n_1$, then there is $i_0>i$ such that the length of the cycle $\sigma_{s, i_0}$ is greater than $|m_s|$. Let $w=x_0x_1\ldots x_{i_0-i}\in X^*_{(i)}$ be a word such that the last letter $x_{i_0-i}$ belongs to the cycle $\sigma_{s, i_0}$ and $x_j\notin\{x_{1, j+i},\ldots, x_{n, j+i}\}$ for every $0\leq j\leq i-i_0$. By Lemma~\ref{lem1} we obtain
$$
C_{M, i}(w)=\Sigma_{M, i}(x_0)\Sigma_{M, i+1}(x_1)\ldots \Sigma_{M, i_0}(x_{i_0-i}).
$$
Since $C_{M, i}(w)=w$, we have $x_{i_0-i}=\Sigma_{M, i_0}(x_{i_0-i})=\sigma_{s, i_0}^{m_s}(x_{i_0-i})$. Consequently $m_s=0$, which finishes the proof of Proposition~\ref{prop1}.
\end{proof}

\subsection{The groups $H_i$}
We consider the transformations
$$
d_{k, s, i}=a_i^{-k}\cdot c_{s, i}\cdot a_i^k,\;\;\;i\geq 0,\;\;\;k\in\mathbb{Z},\;\;\;1\leq s\leq n.
$$
For every  $i\geq 0$ we define the group
$$
H_i=\langle d_{k, s, i}\colon k\in\mathbb{Z},\; 1\leq s\leq n\rangle.
$$
For  every $i\geq 0$ and $k\in\mathbb{Z}$ we also define the  subgroup $K^{(k)}_i\leq H_i$, where
$$
K^{(k)}_i=a_i^{-k}K_ia_i^k=\{a_i^{-k}C_{M,i}a_i^k\colon M\in\mathcal{V}\}.
$$
By Proposition~\ref{prop1} we have the isomorphism $K^{(k)}_i\simeq K_i\simeq K$.

\begin{prop}\label{prop2}
The group $H_i$ is a direct sum of its subgroups $K_i^{(k)}$ for $k\in\mathbb{Z}$ and the mapping
$$
d_{k,s,i}\mapsto (\ldots,{\bf 0},{\bf 0},\kappa_s,{\bf 0},{\bf 0},\ldots),\;\;\;k\in\mathbb{Z},\;\;\;1\leq s\leq n,
$$
where the generator $\kappa_s\in K$ on the right hand side is in the $k$-th position, induces the isomorphism $H_i\simeq \bigoplus_\mathbb{Z}K$.
\end{prop}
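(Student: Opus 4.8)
The plan is to exhibit an explicit isomorphism $\Phi\colon\bigoplus_{\mathbb Z}K\to H_i$ sending the generator $\kappa_s$ placed in position $k$ to $d_{k,s,i}$, and to verify that it is well defined, surjective and injective. Writing $D_{k,M,i}:=a_i^{-k}C_{M,i}a_i^k$, so that $K_i^{(k)}=\{D_{k,M,i}\colon M\in\mathcal V\}$, I would first record the recursion governing these conjugates. Conjugating the recursion~(\ref{6}) of Lemma~\ref{lem1} by $a_i^k$ and using that $\alpha_i=\pi_{1,i}\cdots\pi_{n,i}$ is disjoint from, hence commutes with, every $\sigma_{s,i}$, while it carries the marker $x_{s,i}$ to $\alpha_i^k(x_{s,i})=\pi_{s,i}^k(x_{s,i})$, yields
$$
D_{k,M,i}(xw)=\left\{\begin{array}{ll} xD_{k,M^{(s)},i+1}(w), & x=\alpha_i^k(x_{s,i}),\\ \Sigma_{M,i}(x)D_{k,M,i+1}(w), & \text{otherwise.}\end{array}\right.
$$
Thus $D_{k,M,i}$ behaves exactly like $C_{M,i}$, except that its reset letters are shifted from $x_{s,i}$ to $\alpha_i^k(x_{s,i})$; note that $\Sigma_{M,i}$, being supported on the $\sigma$-cycles, fixes every such marker.

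Next I would establish that $H_i$ is abelian. By conjugating by a suitable power of $a_i$ it suffices to prove that $C_{M',i}=D_{0,M',i}$ commutes with $D_{m,M,i}$ for every $m\in\mathbb Z$, since $D_{k,M,i}$ and $D_{k',M',i}$ commute precisely when $D_{k-k',M,i}$ and $D_{0,M',i}$ do. This I would prove by induction on word length, comparing the actions of $C_{M',i}D_{m,M,i}$ and $D_{m,M,i}C_{M',i}$ on a word $xw$ through a short case analysis on the first letter $x$ (whether $x$ is a reset letter of the first factor only, of the second only, of both, or of neither). In each case the resulting leading letter is the same, because the commuting permutations $\Sigma_{M,i}$ and $\Sigma_{M',i}$ both fix all $\pi$-letters, while the two tails reduce to the same commutativity statement one level deeper with the multi-indices $M,M'$ possibly diminished by one reset; the induction hypothesis then closes the step. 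This is entirely in the spirit of the commutativity computation in Proposition~\ref{prop1}.

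With commutativity in hand, $\Phi$ is a well-defined epimorphism: within a fixed position $k$ the elements $d_{k,s,i}$ satisfy exactly the relations of $K$ by Proposition~\ref{prop1} transported along the conjugation isomorphism $K_i\to K_i^{(k)}=a_i^{-k}K_ia_i^k$, and elements attached to different positions commute. Every element of $H_i$ is therefore of the form $g_{\mathbf M,i}:=\prod_{k}D_{k,M_k,i}$ for a finitely supported family $\mathbf M=(M_k)_{k}$ with $M_k\in\mathcal V$, and composing the per-factor recursions above --- each factor moves the leading letter by $\Sigma_{M_k,i}$, which fixes the reset letters of all the factors and so causes no interference --- gives the clean product recursion
$$
g_{\mathbf M,i}(xw)=\Sigma_{\mathbf M,i}(x)\,g_{\mathbf M^{\langle x\rangle},i+1}(w),\qquad \Sigma_{\mathbf M,i}=\prod_{s=1}^{n}\sigma_{s,i}^{\sum_k (M_k)_s},
$$
where $\mathbf M^{\langle x\rangle}$ arises from $\mathbf M$ by zeroing the $s$-component of $M_k$ for every pair $(k,s)$ with $x=\alpha_i^k(x_{s,i})$. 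Injectivity of $\Phi$ is then the assertion that $g_{\mathbf M,i}=\mathrm{id}$ forces $\mathbf M=\mathbf 0$.

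To prove this I would iterate the product recursion: if $g_{\mathbf M,i}=\mathrm{id}$ then along any descending path $x_0,x_1,\ldots$ one obtains $\Sigma_{\mathbf M',j}=\mathrm{id}$ for the residual family $\mathbf M'$ at each level $j$. Fixing a target pair $(k_0,s_0)$, the goal is to steer the path so as to zero the $s_0$-component of every $M_k$ with $k\neq k_0$ while leaving $(M_{k_0})_{s_0}$ untouched; at a suitable level only $\sigma_{s_0,\cdot}^{(M_{k_0})_{s_0}}$ then survives in $\Sigma$, forcing $(M_{k_0})_{s_0}=0$. Since the component $s_0$ is reset at level $j$ exactly by the letter $\pi_{s_0,j}^{k}(x_{s_0,j})$, and since the lengths $|\pi_{s_0,j}|$ are unbounded, I can descend to levels where these markers are pairwise distinct over the finite index set in play, route the path through the $k$-marker for each $k\ne k_0$, and avoid the $k_0$-marker throughout; the concluding permutation $\sigma_{s_0,j}^{(M_{k_0})_{s_0}}=\mathrm{id}$ yields $(M_{k_0})_{s_0}=0$, using that $|\sigma_{s_0,j}|$ can be taken larger than $|(M_{k_0})_{s_0}|$ when $s_0\le n_1$, and that $|\sigma_{s_0,j}|=r_{s_0-n_1}$ bounds the admissible exponents when $s_0>n_1$. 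As $(k_0,s_0)$ is arbitrary, $\mathbf M=\mathbf 0$, so $\Phi$ is an isomorphism and $H_i=\bigoplus_{k}K_i^{(k)}\simeq\bigoplus_{\mathbb Z}K$. The main obstacle is precisely this separation step: at a single level the reset markers for different conjugation indices $k$ may collide, and it is only the unboundedness of the branching --- the long $\pi$-cycles deep in the tree --- that lets one toggle a single lamp position $k$ without disturbing $k_0$.
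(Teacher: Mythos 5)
Your proof is correct, and up to the final step it matches the paper: the same recursion (\ref{lem2}) for the conjugates $d_{k,M,i}$, the same commutativity case analysis making $H_i$ abelian, and the same normal form via the factorization of Lemma~\ref{lem3}. Where you genuinely diverge is the uniqueness/injectivity step. The paper argues by contradiction: it picks an identity product $d_{k_1,M_1,i_0}\cdots d_{k_t,M_t,i_0}$ of minimal nonzero weight, propagates triviality to all deeper levels by routing through letters $z_i$ lying outside every $\pi$-cycle (formula (\ref{16})), and then, at a level $\nu$ with $|\pi_{s,\nu}|>|k_1|+\cdots+|k_t|$, feeds in the single marker $\alpha_\nu^{k_1}(x_{s,\nu})$, stripping one exponent and lowering the weight --- an infinite descent. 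You run the same mechanism forward: your product recursion for $g_{\mathbf{M},i}$ generalizes (\ref{lem2}) and (\ref{16}) to an arbitrary finitely supported family at once (and makes the paper's $z_i$-letter step unnecessary, since $g=\mathrm{id}$ already forces every residual along every child to be the identity), and you then constructively steer a path that zeroes the $s_0$-component of each $M_k$ with $k\neq k_0$ while avoiding the $k_0$-marker, so that $\Sigma=\mathrm{id}$ together with disjointness of the $\sigma$-cycles forces $(M_{k_0})_{s_0}=0$. Both arguments rest on the identical separation fact --- deep in the tree the markers $\pi_{s,j}^k(x_{s,j})$ for the finitely many relevant $k$ are pairwise distinct, by unboundedness of the $\pi$-cycle lengths --- so the mathematical content coincides; your version buys a direct, coordinate-by-coordinate elimination in place of the minimal-counterexample and weight bookkeeping, at the price of heavier notation for the residual families $\mathbf{M}^{\langle x\rangle}$. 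One point you should make explicit when writing this up: in composing the per-factor recursions, the reset condition of the $j$-th factor a priori concerns the intermediate letter $\Sigma_{M_{k_1},i}\cdots\Sigma_{M_{k_{j-1}},i}(x)$, and it reduces to a condition on $x$ alone only because all markers lie in $\pi$-cycles, which every $\Sigma$ fixes; your remark that the factors ``cause no interference'' is exactly this load-bearing observation and deserves a line of proof rather than an aside.
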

\begin{proof}
By the recursion for $a_i$ and by~(\ref{5}) we easily obtain:
$$
d_{k, s, i}(xw)=\left\{\begin{array}{ll}
xw, &x=\alpha_i^k(x_{s, i}),\\
\sigma_{s, i}(x)d_{k, s, i+1}(w), &x\neq \alpha_i^k(x_{s, i})
\end{array}
\right.
$$
for all $x\in X_i$ and $w\in X^*_{(i+1)}$. Further, by the recursion for $a_i$ and by~(\ref{14}) we obtain for all $k, k'\in\mathbb{Z}$, $1\leq s, s'\leq n$, $x\in X_i$ and $w\in X^*_{(i+1)}$ the following recursion:
$$
d_{k, s, i}d_{k', s',i}(xw)=\left\{\begin{array}{ll}
xd_{k', s',i+1}(w),&x=\alpha_i^k(x_{s,i}),\\
xd_{k, s , i+1}(w), &x=\alpha_i^{k'}(x_{s',i}),\\
\sigma_{s,i}\sigma_{s',i}(x)d_{k, s,i+1}d_{k', s',i+1}(w), &\mbox{\rm otherwise},
\end{array}\right.
$$
and hence
\begin{eqnarray*}
d_{k,s,i}d_{k', s',i}(xw)=\left\{\begin{array}{ll}
d_{k', s', i}d_{k, s,i}(xw),&x\in\{\alpha_i^k(x_{s,i}),\alpha_i^{k'}(x_{s',i})\},\\
\sigma_{s,i}\sigma_{s',i}(x)d_{k, s,i+1}d_{k', s',i+1}(w), &\mbox{\rm otherwise}.
\end{array}\right.
\end{eqnarray*}
Consequently, the group $H_i$ is abelian. For $k\in\mathbb{Z}$, $M\in\mathcal{V}$ and $i\geq 0$ we  denote
$$
d_{k, M, i}=a_i^{-k}C_{M,i}a_i^k.
$$
In particular, we have $K_i^{(k)}=\{d_{k, M, i}\colon M\in\mathcal{V}\}$.
\begin{lem}\label{lem3}
If $M=(m_1, \ldots, m_n)\in\mathcal{V}$, then
$$
d_{k, M, i}=d_{k, 1, i}^{m_1}\cdot\ldots\cdot d_{k, n, i}^{m_n}.
$$
\end{lem}
\begin{proof}[of Lemma~\ref{lem3}]
We have
\begin{eqnarray*}
d_{k, M,i}&=&a_i^{-k}\cdot c_{1, i}^{m_1}\cdot\ldots\cdot c_{n, i}^{m_n}\cdot a_i^k=\\
&=&(a_i^{-k}\cdot c_{1, i}^{m_1}\cdot a_i^k)\cdot (a_i^{-k}\cdot c_{2, i}^{m_2}\cdot a_i^k)\cdot\ldots\cdot (a_i^{-k}\cdot c_{n, i}^{m_n}\cdot a_i^k)=\\
&=&(a_i^{-k}\cdot c_{1, i}\cdot a_i^k)^{m_1}\cdot\ldots\cdot (a_i^{-k}\cdot c_{n, i}\cdot a_i^k)^{m_n}=\\
&=&d_{k, 1, i}^{m_1}\cdot\ldots\cdot d_{k, n, i}^{m_n},
\end{eqnarray*}
which finishes the proof of Lemma~\ref{lem3}.
\end{proof}

Since the group $H_i$ is abelian, we see by Lemma~\ref{lem3} that every element of $H_i$ can be presented as a product
\begin{equation}\label{18}
d_{k_1, M_1, i}\cdot\ldots\cdot d_{k_t, M_t, i},\;\;\;t\geq 1,
\end{equation}
where
$$
k_1<k_2<\ldots<k_t,\;\;\;k_j\in\mathbb{Z},\;\;\; M_j\in \mathcal{V},\;\;\;1\leq j\leq t.
$$
We call the weight of the product (\ref{18}) as a number
$$
||M_1||+\ldots+||M_t||,
$$
where $||M||=|m_1|+\ldots+|m_n|$ for any $M=(m_1, \ldots, m_n)\in\mathcal{V}$. Now, we have to show that every element of $H_i$ is represented uniquely by such a product. Suppose not. Then there are products (\ref{18}) with nonzero weights defining the identity transformation. By Proposition~\ref{prop1} it must be $t\geq 2$ in every such product.  Let us choose $t\geq 2$, $i_0\geq 0$ and  the triples
$$
(k_1, M_1, i_0), \ldots, (k_t, M_t, i_0)\in \mathbb{Z}\times \mathcal{V}\times \{i_0\},
$$
such that the  product
$$
d_{k_1, M_1, i_0}\cdot\ldots\cdot d_{k_t, M_t, i_0}
$$
defines the identity transformation and its nonzero weight is minimal.  We can  assume  that each $M_j\in\mathcal{V}$ ($1\leq j\leq t$) is a nonzero vector.  For every $i\geq 0$ we define the product
$$
D_i=d_{k_1, M_1, i}\cdot\ldots\cdot d_{k_t, M_t, i}.
$$
There is a letter $z_i\in X_i$ ($i\geq 0$) which does not belong to any of the cycles $\pi_{s, i}$ ($1\leq s\leq n$). In particular $z_i\neq\alpha_i^{k_1}(x_{s, i})$ for every $1\leq s\leq n$. For every $1\leq s\leq n$ we also have
\begin{equation}\label{eqqq}
\Sigma_{M_1, i}\cdot \Sigma_{M_2, i}\cdot\ldots\cdot\Sigma_{M_j, i}(z_i)\neq \alpha_i^{k_{j+1}}(x_{s, i}),\;\;\;1\leq j\leq t.
\end{equation}
Let us  denote:
$$
z_i'=\Sigma_{M_1, i}\cdot\ldots\cdot\Sigma_{M_t, i}(z_i),\;\;\;i\geq0.
$$
For all $i\geq 0$, $k\in\mathbb{Z}$, $M\in\mathcal{V}$, $x\in X_i$ and $w\in X^*_{(i+1)}$ we obtain by Lemma~\ref{lem1}  the following recursion
\begin{equation}\label{lem2}
d_{k, M, i}(xw)=\left\{
\begin{array}{ll}
xd_{k, M^{(s)}, i+1}(w), &x=\alpha^k_i(x_{s,i}),\\
\Sigma_{M, i}(x)d_{k, M, i+1}(w), &\mbox{\rm otherwise}.
\end{array}
\right.
\end{equation}
The above recursion and the inequalities~(\ref{eqqq}) imply
\begin{equation}\label{16}
D_i(z_iz_{i+1}\ldots z_{i+j}w)=z'_iz'_{i+1}\ldots z'_{i+j}D_{i+j+1}(w)
\end{equation}
for all $i, j\geq 0$ and  $w\in X^*_{(i+j+1)}$. Since $D_{i_0}$ defines the identity mapping, we obtain by (\ref{16}) that for every $i\geq i_0$ the transformation $D_{i}$ also defines the identity mapping. Let $1\leq s\leq n$ be such that the $s$-th component of $M_1$ is not equal to 0. In particular $||M_1^{(s)}||<||M_1||$. Let  $\nu\geq i_0$ be such that the length $L$ of the cycle $\pi_{s, \nu}$ is greater than the sum $|k_1|+\ldots+|k_t|$.
Let us consider the letter
$$
z=\alpha_{\nu}^{k_1}(x_{s, \nu})=\pi_{s, \nu}^{k_1}(x_{s, \nu})\in X_{\nu}.
$$
By the equality~(\ref{lem2}) we have
\begin{equation}\label{22}
d_{k_1, M_1, \nu}(zw)=zd_{k_1, M_1^{(s)}, \nu+1}(w)
\end{equation}
for every $w\in X^*_{(\nu+1)}$.
For every $2\leq j\leq t$ we obviously have:
\begin{equation}\label{20}
\Sigma_{M_2, \nu}\cdot\ldots\cdot \Sigma_{M_j, \nu}(z)=z.
\end{equation}
For $2\leq j\leq t$ we also have
\begin{equation}\label{21}
z\notin\{\alpha_\nu^{k_j}(x_{1, \nu}),\ldots, \alpha_\nu^{k_j}(x_{n, \nu})\}.
\end{equation}
Indeed, suppose to othe contrary that
$$
z=\alpha_\nu^{k_j}(x_{s', \nu})=\pi_{s', \nu}^{k_j}(x_{s', \nu})
$$
for some $2\leq j\leq t$ and $1\leq s'\leq n$. Then we have
$$
\pi_{s, \nu}^{k_1}(x_{s, \nu})=\pi_{s', \nu}^{k_j}(x_{s', \nu}).
$$
Since the cycles $\pi_{1, \nu}, \ldots, \pi_{n, \nu}$ are pairwise disjoint, we obtain $s=s'$. Hence  $\pi_{s,\nu}^{k_j-k_1}(x_{s, \nu})=x_{s, \nu}$ and  consequently $L\mid k_j-k_1$, which contradicts with the inequality $L>|k_1|+\ldots+|k_t|$. By (\ref{lem2}) and by (\ref{22})-(\ref{21}) we obtain $D_\nu(zw)=z D(w)$ for all $w\in X^*_{(\nu+1)}$, where
$$
D=d_{k_1, M^{(s)}_1, \nu+1}\cdot d_{k_2, M_2, \nu+1}\cdot\ldots\cdot d_{k_t, M_t, \nu+1}.
$$
Since $D_\nu$ defines the identity mapping, the product $D$ also defines the identity mapping. Since  $||M_1^{(s)}||<||M_1||$ and each $M_j$ ($2\leq j\leq t$) is a nonzero vector, we see that  $D$ has a nonzero weight  which is smaller than the weight of $D_{i_0}$,  contrary to our assumption. This finishes the proof of Proposition~\ref{prop2}.
\end{proof}

\begin{lem}\label{lem4} We have
$$
a_i^{-k}\cdot d_{k_1, M_1, i}\cdot\ldots\cdot d_{k_t, M_t, i}\cdot a_i^k=d_{k+k_1, M_1, i}\cdot\ldots\cdot d_{k+k_t, M_t, i}
$$
for all $i\geq 0$, $t\geq 1$, $k, k_j\in\mathbb{Z}$ and $M_j\in\mathcal{V}$ ($1\leq j\leq t$).

\end{lem}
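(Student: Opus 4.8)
The plan is to reduce the statement to its purely definitional content. Recall that $d_{k,M,i}=a_i^{-k}C_{M,i}a_i^{k}$, so each factor $d_{k_j,M_j,i}$ is nothing but $C_{M_j,i}$ conjugated by a power of $a_i$. Consequently no information about the action on the tree is needed: the identity is just the bookkeeping of exponents of the single transformation $a_i$ under conjugation, together with the fact that conjugation is a group automorphism and hence distributes over products.

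First I would record the single-factor version. For a fixed $j$, unwinding the definition gives
\[
a_i^{-k}\,d_{k_j,M_j,i}\,a_i^{k}=a_i^{-k}\bigl(a_i^{-k_j}C_{M_j,i}a_i^{k_j}\bigr)a_i^{k}=a_i^{-(k+k_j)}C_{M_j,i}a_i^{\,k+k_j}=d_{k+k_j,M_j,i},
\]
where the middle equality uses only that powers of $a_i$ add. This is the heart of the matter: conjugation by $a_i^{k}$ shifts the first index of $d_{k_j,M_j,i}$ from $k_j$ to $k+k_j$ and leaves $M_j$ untouched.

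For the general product I would then insert the trivial factors $a_i^{k}a_i^{-k}$ between consecutive terms so that the outer conjugation distributes over the whole product:
\[
a_i^{-k}\,d_{k_1,M_1,i}\cdots d_{k_t,M_t,i}\,a_i^{k}=\prod_{j=1}^{t}\bigl(a_i^{-k}\,d_{k_j,M_j,i}\,a_i^{k}\bigr)=\prod_{j=1}^{t}d_{k+k_j,M_j,i},
\]
the last equality being the single-factor identity applied termwise. This is exactly the claimed formula. Alternatively one could run a one-line induction on $t$, peeling off the final factor and again using that $a_i^{k}a_i^{-k}$ is the identity, but the telescoping insertion above makes the induction unnecessary.

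There is essentially no obstacle here; the statement is a formal consequence of the definition of $d_{k,M,i}$ as a conjugate of $C_{M,i}$ by a power of $a_i$. The only point requiring minor care is the order of composition under the right-action convention used throughout, but since the whole computation takes place inside the group generated by the $a_i$ and the $c_{s,i}$ and involves only conjugation and concatenation of factors, the convention does not interfere with any of the steps above.
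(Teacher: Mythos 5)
Your proposal is correct and matches the paper's own proof essentially verbatim: the paper likewise inserts the trivial factors $a_i^{k}a_i^{-k}$ between consecutive terms and then computes each conjugated factor as $a_i^{-k}\bigl(a_i^{-k_j}C_{M_j,i}a_i^{k_j}\bigr)a_i^{k}=a_i^{-(k+k_j)}C_{M_j,i}a_i^{k+k_j}=d_{k+k_j,M_j,i}$. Nothing further is needed.
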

\begin{proof} The left side of the above equality is equal to
$$
(a_i^{-k}\cdot d_{k_1, M_1, i}\cdot a_i^k)\cdot (a_i^{-k}\cdot d_{k_2, M_2, i}\cdot a_i^k)\cdot\ldots\cdot (a_i^{-k}\cdot d_{k_t, M_t, i}\cdot a_i^k).
$$
Now, it suffices to see that the $j$-th factor ($1\leq j\leq t$) of the above product is equal to
\begin{eqnarray*}
a_i^{-k}\cdot d_{k_j, M_j, i}\cdot a_i^k&=&a_i^{-k}\cdot (a_i^{-k_j}\cdot C_{M_j, i}\cdot a_i^{k_j})\cdot a_i^k=\\
&=&a_i^{-(k+k_j)}\cdot C_{M_j, i}\cdot a_i^{k+k_j}=d_{k+k_j, M_j, i}.
\end{eqnarray*}
This finishes the proof of Lemma~\ref{lem4}.
\end{proof}

Let us fix $1\leq s\leq n$. For all $i, j\geq 0$ let us consider the words $w_{i, j}\in X^*_{(i)}$, where
$$
w_{i, j}=x_{s, i}x_{s, i+1}\ldots x_{s, i+j}.
$$
\begin{lem}\label{lem5} For every $i\geq 0$ the following hold:
\begin{itemize}
\item[(i)] the group $H_i$ is contained in the stabilizer of any $w_{i, j}$,
\item[(ii)] for every nonzero integer $k$ there is $j\geq0$ such that $a_i^k(w_{i,j})\neq w_{i, j}$.
\end{itemize}
\end{lem}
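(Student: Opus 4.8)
The plan is to prove both parts directly from the recursion~(\ref{5}) defining the transformations $c_{s,i}$, together with Lemma~\ref{lem1} and the conjugation formula of Lemma~\ref{lem4}. The key observation driving everything is that $x_{s,i}$ is a fixed letter of the cycle $\pi_{s,i}$ and, since $\pi_{s,i}$ and $\sigma_{s,i}$ are disjoint, it is fixed by $\sigma_{s,i}$; by~(\ref{5}) this means that $c_{s,i}$ acts on any word beginning with the letter $x_{s,i}$ by leaving that first letter untouched and passing the remaining suffix unchanged, i.e.\ $c_{s,i}(x_{s,i}w)=x_{s,i}w$.

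For part~(i) I would argue that each generator $d_{k,s,i}$ of $H_i$ stabilizes every $w_{i,j}$, and then invoke the fact that the stabilizer of a word is a subgroup to conclude that all of $H_i$ does. Using the recursion from the proof of Proposition~\ref{prop2}, namely $d_{k,s,i}(xw)=xw$ when $x=\alpha_i^k(x_{s,i})$ and $d_{k,s,i}(xw)=\sigma_{s,i}(x)\,d_{k,s,i+1}(w)$ otherwise, I would show inductively on $j$ that $d_{k,s,i}$ fixes $w_{i,j}=x_{s,i}x_{s,i+1}\ldots x_{s,i+j}$. The first letter of $w_{i,j}$ is $x_{s,i}$; since $\alpha_i^k(x_{s,i})=\pi_{s,i}^k(x_{s,i})$ need not equal $x_{s,i}$, I would instead peel off the first letter via the second branch of the recursion: $\sigma_{s,i}(x_{s,i})=x_{s,i}$, so $d_{k,s,i}(w_{i,j})=x_{s,i}\,d_{k,s,i+1}(w_{i+1,j-1})$, and then apply the induction hypothesis to $d_{k,s,i+1}$ and $w_{i+1,j-1}$. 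This reduces the claim to the base case, where the length-one word $w_{i,0}=x_{s,i}$ is visibly fixed. It is cleanest to observe that since $H_i$ is abelian (Proposition~\ref{prop2}) and every generator fixes each $w_{i,j}$, the whole group $H_i$ lies in the stabilizer of $w_{i,j}$ for every $j$.

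For part~(ii) the idea is that $a_i$ acts on the first letter $x_{s,i}$ via $\alpha_i=\pi_{1,i}\cdot\ldots\cdot\pi_{n,i}$, and because $x_{s,i}$ lies in $\pi_{s,i}$ and the cycles are disjoint, $a_i^k$ sends $x_{s,i}$ to $\pi_{s,i}^k(x_{s,i})$. Iterating the recursion $a_i(xw)=\alpha_i(x)a_{i+1}(w)$ gives
$$
a_i^k(w_{i,j})=\pi_{s,i}^k(x_{s,i})\,\pi_{s,i+1}^k(x_{s,i+1})\ldots\pi_{s,i+j}^k(x_{s,i+j}).
$$
For this to equal $w_{i,j}$ we would need $\pi_{s,i+m}^k(x_{s,i+m})=x_{s,i+m}$ for all $0\le m\le j$, i.e.\ the length of each cycle $\pi_{s,i+m}$ must divide $k$. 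The key hypothesis here is that the cycles $\pi_{s,i}$ ($i\geq 0$) do not have a uniformly bounded length, so for a fixed nonzero $k$ there exists an index $\nu\geq i$ with $|\pi_{s,\nu}|>|k|$, whence $\pi_{s,\nu}^k(x_{s,\nu})\neq x_{s,\nu}$. Choosing $j=\nu-i$ then yields $a_i^k(w_{i,j})\neq w_{i,j}$.

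The routine parts are the two inductions, which follow mechanically from~(\ref{5}) and the disjointness of the cycles. The only genuine point requiring care is making sure in part~(ii) that the unbounded-length assumption is invoked correctly: I must select $\nu$ \emph{depending on} the fixed $k$ (not uniformly), and confirm that $x_{s,\nu}$ being a genuine point of the nontrivial cycle $\pi_{s,\nu}$ forces $\pi_{s,\nu}^k$ to move it once $|k|$ is smaller than the cycle length. This is where the hypothesis that $\pi_{s,i}$ have unbounded length is essential, and it is precisely what guarantees that $a_i$ has infinite order on the relevant subtree; conceptually this is the lemma that will later force $H_i\cap\langle a_i\rangle=\{1\}$ and thereby produce the semidirect-product structure of $G_i$.
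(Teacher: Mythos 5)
Your proposal is correct and follows the paper's own proof essentially step for step: for (i) the paper likewise observes that each $d_{k,M,i+j}$ fixes the letter $x_{s,i+j}$ (it uses the recursion~(\ref{lem2}) for all $d_{k,M,i}$ where you use the generator recursion plus closure of stabilizers under products and inverses), and for (ii) it writes exactly your formula $a_i^k(w_{i,j})=\pi_{s,i}^k(x_{s,i})\cdots\pi_{s,i+j}^k(x_{s,i+j})$ and invokes the unboundedness of the cycle lengths, with your choice of $\nu\geq i$ satisfying $|\pi_{s,\nu}|>|k|$ being the correct way to make that invocation precise. Two trivially fillable points of polish in your induction for (i): you must also check the generators $d_{k,s',i}$ with $s'\neq s$ against the word $w_{i,j}$ built from the fixed $s$ (immediate, since all $2n$ cycles are pairwise disjoint, so $\alpha_i^k(x_{s',i})\in\mathrm{supp}\,\pi_{s',i}$ can never equal $x_{s,i}$ and $\sigma_{s',i}(x_{s,i})=x_{s,i}$), and in the case $s'=s$ with the length of $\pi_{s,i}$ dividing $k$ it is the \emph{first} branch, not the second, that fires, giving $d_{k,s,i}(w_{i,j})=w_{i,j}$ outright rather than via the peeling step.
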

\begin{proof}
For $j\geq 0$ and $M\in\mathcal{V}$ we have $\Sigma_{M, i+j}(x_{s, i+j})=x_{s, i+j}$. Thus by the equality~(\ref{lem2}) we see that for every $j\geq 0$ the transformations $d_{k, M, i+j}$ ($k\in\mathbb{Z}$,  $M\in\mathcal{V}$)  stabilize the letter $x_{s, i+j}$. Consequently,  the elements $d_{k, M, i}$ ($k\in\mathbb{Z}$, $M\in\mathcal{V}$)  stabilize the words $w_{i, j}$ ($j\geq 0$). Since the elements generate the group $H_i$, we obtain~(i). To show (ii) we see by the recursion for $a_i$ that for every $j\geq0$ we have:
$$
a_i^k(w_{i,j})=\alpha_i^k(x_{s,i})\ldots\alpha_{i+j}^k(x_{s, i+j})=\pi_{s,i}^k(x_{s,i})\ldots\pi_{s, {i+j}}^k(x_{s, i+j}).
$$
Now, the claim follows from the assumption that the lengths of  cycles $\pi_{s, i+j}$ ($j\geq 0$) are not uniformly bounded.
\end{proof}

\subsection{The groups $G_i$}
Let us define the groups
$$
G_i=\langle a_i, c_{1, i},\ldots, c_{n, i}\rangle=\langle K_i, a_i\rangle,\;\;\;i\geq 0.
$$
Directly by the definition of the group $H_i$ we obtain $K_i\leq H_i\triangleleft G_i$. By Lemma~\ref{lem5} the group $H_i$ intersects trivially with the cyclic group $\langle a_i\rangle$. Thus $G_i$ is a semidirect product of  the subgroups $H_i $ and $\langle a_i\rangle$. By  Proposition~\ref{prop2} the subgroup $H_i$ is isomorphic with the direct sum $\bigoplus_\mathbb{Z}K$ via
$$
d_{k,s,i}\mapsto (\ldots,{\bf 0},{\bf 0},\kappa_s,{\bf 0},{\bf 0},\ldots),\;\;\;k\in\mathbb{Z},\;\;\;1\leq s\leq n,
$$
where $\kappa_s\in K$ on the right hand side is in the $k$-th position. By Lemma~\ref{lem5}  the cyclic group $\langle a_i\rangle$ is isomorphic, via $a_i\mapsto u$, with $\mathbb{Z}$. By Lemma~\ref{lem4} this cyclic group acts on $H_i$ by conjugation via the shift. Consequently, we obtain the isomorphism $G_i\simeq \bigoplus_\mathbb{Z}K\rtimes \mathbb{Z}= K\wr \mathbb{Z}$, which is induced by the mapping $a_i\mapsto u$, $c_{s,i}\mapsto \eta_s$ ($1\leq s\leq n$).
Further, for the group $G(\mathcal{A}_{i})$ generated by the $i$-th shift ($i\geq0$) of the automaton $\mathcal{A}$ we have
$$
G(\mathcal{A}_{i})=\langle a_i, (b_1)_i,\ldots, (b_n)_i\rangle.
$$
Since $c_{s,i}=(b_s)_ia_i^{-1}$, we obtain $G(\mathcal{A}_{i})=G_i$ and  the mapping
$$
a_i\mapsto u,\;\;\;(b_s)_i\mapsto \eta_s\cdot u,\;\;\;1\leq s\leq n
$$
induces an isomorphism $G(\mathcal{A}_{i})\simeq K\wr\mathbb{Z}$, which finishes the proof of Theorem~\ref{t2}.

\section{Examples}
\subsection{The lamplighter group $\mathbb{Z}\wr\mathbb{Z}$}
The lamplighter group $\mathbb{Z}\wr\mathbb{Z}$ is an example of a 2-generated torsion-free  not finitely presented metabelian group. It is generated by two elements $u$, $\eta$, where $u$ is a generator of $\mathbb{Z}$ in the semidirect product $\bigoplus_\mathbb{Z}\mathbb{Z}\rtimes \mathbb{Z}$ and
$$
\eta=(\ldots,0,0,0,u,0,0,0,\ldots),
$$
where   $u$ on the right hand side is in zero position. This group has the following presentation $\langle u, \eta\colon [\eta^{u^k},\eta^{u^{k'}}]=1,\;\;k,k'\in\mathbb{Z}\rangle$,
where $[x,y]=x^{-1}y^{-1}xy$.

Basing on the  construction of the automaton $\mathcal{A}$ we can present the lamplighter group $\mathbb{Z}\wr \mathbb{Z}$ as a group defined by a 2-state self-similar automaton over the changing alphabet as follows.
Let $(r_i)_{i\geq 0}$ be an arbitrary unbounded sequence of integers $r_i>1$ and let $X=(X_i)_{i\geq 0}$ be the changing alphabet with
$$
X_i=\{1,2,\ldots, 2r_i\}.
$$
Let $A=(X, Q, \varphi, \psi)$ be an automaton with the 2-element set $Q=\{a, b\}$ of internal states and the following sequences $\varphi=(\varphi_i)_{i\geq 0}$, $\psi=(\psi_i)_{i\geq 0}$ of transition and output functions:
$$
\varphi_i(q, x)=\left\{\begin{array}{ll}
a, &q=b,\;\;x=1,\\
q, &\mbox{\rm otherwise},
\end{array}\right.\;\;\;
\psi_i(q,x)=\left\{\begin{array}{ll}
\alpha_i(x),&q=a,\\
\beta_{i}(x), &q=b,
\end{array}\right.
$$
where the permutations $\alpha_i$, $\beta_i$ of the set $X_i$ may be defined, for example, as follows:
$$
\alpha_i=(1,3,\ldots, 2r_i-1),\;\;\;\beta_i=(1,3,\ldots, 2r_i-1)(2,4,\ldots,2r_i).
$$
By above, we see that the automaton $A$ is self-similar and the group $G(A)$ defined by $A$ is isomorphic with the lamplighter group $\mathbb{Z}\wr\mathbb{Z}$ via  $a_0\mapsto u$, $c_0\mapsto \eta$, where $c_0=b_0a_0^{-1}$.

\subsection{The lamplighter groups $\mathbb{Z}^n\wr \mathbb{Z}$, $n\geq 1$}

Generalizing the above construction, we can describe for every positive integer $n$ the wreath product $\mathbb{Z}^n\wr \mathbb{Z}$ as a group defined by the following  self-similar automaton $A=(X, Q, \varphi, \psi)$:
\begin{itemize}
\item $X=(X_i)_{i\geq 0}$, where $X_i=\{1,2,\ldots, 2nr_i\}$ and $(r_i)_{i\geq 0}$ is an arbitrary unbounded sequence of integers greater than 1,
\item $Q=\{a, b_0, \ldots, b_{n-1}\}$,
\item $
\varphi_i(q, x)=\left\{\begin{array}{ll}
a, &q=b_s,\;\;x=2sr_i+1,\;\;0\leq s\leq n-1,\\
q, &\mbox{\rm otherwise},
\end{array}\right.
$
\item $
\psi_i(q,x)=\left\{\begin{array}{ll}
\alpha_i(x),&q=a,\\
\beta_{s, i}(x), &q=b_s,\;\;0\leq s\leq n-1,
\end{array}\right.
$
\end{itemize}
where the permutations $\alpha_i$, $\beta_{s, i}$ of the set $X_i$ are defined as follows
\begin{eqnarray*}
\alpha_i&=&\prod\limits_{s=0}^{n-1}(2sr_i+1, 2sr_i+3,\ldots, 2sr_i+2r_i-1),\\
\beta_{s, i}&=&\alpha_i\cdot (2sr_i+2, 2sr_i+4, \ldots, 2sr_i+2r_i).
\end{eqnarray*}

\subsection{The lamplighter groups $C_r\wr\mathbb{Z}$, $r>1$}
For  lamplighter groups of the form $C_r\wr\mathbb{Z}$ ($r>1$) the corresponding self-similar automaton representation $A=(X, Q, \varphi, \psi)$ may be described as follows:
\begin{itemize}
\item $X_i=\{1,2,\ldots, r_i, r_i+1, r_i+2,\ldots, r_i+r\}$, where $(r_i)_{i\geq 0}$ is an arbitrary unbounded sequence of  integers greater than 1,
\item $Q=\{a, b\}$,
\item $
\varphi_i(q, x)=\left\{\begin{array}{ll}
a, &q=b,\;\;x=1,\\
q, &\mbox{\rm otherwise},
\end{array}\right.
$
\item $\psi_i(q,x)=\left\{\begin{array}{ll}
\alpha_i(x),&q=a,\\
\beta_{i}(x), &q=b,
\end{array}\right.
$
\end{itemize}
where the permutations $\alpha_i,\beta_i\in Sym(X_i)$ may be defined, for example, as follows:
$$
\alpha_i=(1,2,\ldots, r_i),\;\;\;\beta_i=(1,2,\ldots, r_i)(r_i+1, r_i+2, \ldots, r_i+r).
$$


\begin{thebibliography}{99}
\addcontentsline{toc}{section}{Literatura}


\bibitem{7} I. Bondarenko, R. Grigorchuk, R. Kravchenko, Y. Muntyan, V. Nekrashevych, Z. Sunic, D. Savchuk. {\it Groups generated by 3-state automata over a 2-letter alphabet II}, Journal of Mathematical Sciences
Vol. 156, N 1 (2009), 187-208.

\bibitem{31} A. Erschler. {\it Piecewise automatic groups}, Duke Math. J., 134:3 (2006), 591-613.

\bibitem{32} A. Erschler. {\it Automatically presented groups}, Groups Geom. Dyn., (2007), 47-59.

\bibitem{0} R. Grigorchuk. {\it Solved and Unsolved Problems Around One Group}, in Infinite Groups: Geometric, Combinatorial and Dynamical Aspects. Progress in Mathematics Series, Vol. 248, L.~Bartholdi, T.~Ceccherini-Silberstein, T.~Smirnova-Nagnibeda, A. ${\rm{\dot Z}}$uk (Eds.), 2005, 413 p.

\bibitem{10} R. Grigorchuk, P. Linnell, T. Schick and A. ${\rm \dot{Z}uk}$. {\it  On a question of Atiyah}, C. R. Acad. Sci., Paris, Sr. I, Math. 331 (2000), 663-668.

\bibitem{1} R. Grigorchuk, V. Nekrashevych, V. Sushchanskii. {\it Automata, Dynamical Systems and Groups}, Proceedings of Steklov Institute of Mathematics, 231:128-203, 2000.


\bibitem{2} R. Grigorchuk, A. ${\rm \dot{Z}}$uk. {\it The Lamplighter Group as a Group Generated by a 2-state Automaton and its spectrum}, Geometriae Dedicata, 87:209-244, 2001.

\bibitem{17} L. Kaloujnine. {\it La structure des $p$-groupes de Sylow des groupes symetriques finis}, Ann. Sci. Ecole Norm. Sup. (3), 65:239-276, 1948.

\bibitem{3} M. Kambites, P. Silva, B. Steinberg. {\it The spectra of lamplighter groups and Cayley machines}, Geometriae Dedicata, Vol. 120, No. 1. (2006), pp. 193-227.

\bibitem{9} B. Mikolajczak. {\it Algebraic and structural automata theory}, translated from the Polish Annals of Discrete Mathematics, Vol. 44 (North-Holland Publishing Co., Amsterdam, 1991).

\bibitem{4} V. Nekrashevych. {\it Self-similar groups}, volume 117 in Mathematical Surveys and Monographs. Amer. Math. Soc., Providence, RI, 2005.

\bibitem{44} A. V. Rozhkov. {\it Theory of Aleshin type groups}, Math. Notes Vol. 40, Number 5, (1986), 827-836.

\bibitem{66} A. V. Rozhkov. {\it Conditions for finiteness in Aleshin-type groups}, Algebra and Logic, Vol. 29, Number 6 (1990), 467-479.

\bibitem{55} A. V. Rozhkov. {\it Stabilizers of corteges in Aleshin-type groups},  Math. Notes Vol. 47, Number 3 (1990), 307-312.

\bibitem{5} P. Silva, B. Steinberg. {\it On a class of automata groups generalizing lamplighter groups},  Internat. J. Algebra Comput.  15  (2005),  no. 5-6, 1213-1234.

\bibitem{16} V. I. Sushchanskyy. {\it Periodic permutation $p$-groups and the unrestricted Burnside problem}, Soviet Math. Dokl. vol 20 (1979), N4, 766-770.

\bibitem{18} V. I. Sushchanskyy. {\it Wreath products and periodic factorable groups}, Math. USSR Sbornik, vol. 67 (1990), N2, 535-553.


\bibitem{19} V. I. Sushchanskyy. {\it Wreath products and factorable groups}, St. Petersburg Math, v. 6 (1995), N1, 173-198.

\bibitem{8} M. Vorobets, Ya. Vorobets. {\it On a free group of transformations defined by an automaton}, Geom. Dedicata 124 (2007) 237-249.

\bibitem{11} A. Woryna. {\it On permutation groups generated by time-varying Mealy automata},  Publ. Math. Debrecen, 67(1-2):115-130, 2005.

\bibitem{6} A. Woryna. {\it Representations of a free group of rank two by time-varying Mealy automata}, Discuss. Math. Gen. Algebra Appl. 25 (1) (2005) 119-134.


\bibitem{12} A. Woryna. {\it On generation of wreath products of cyclic groups by two state time varying Mealy automata}, Internat. J. Algebra Comput., 16(2):397-415, 2006.

\bibitem{15} A. Woryna. {\it  The concept of duality for automata over a changing alphabet and generation a free group by such automata},  Theoret. Comput. Sci., 412(45):6420-6431, 2011.






\end{thebibliography}
\end{document}